\newtheorem{proposition}{Proposition}
\newtheorem{definition}{Definition}[section]
\DeclareMathOperator*{\argmax}{argmax}
\DeclareMathOperator*{\argmin}{argmin}
\begin{document}

\begin{center} 
{\Large Physics-Informed Tolerance Allocation: A Surrogate-Based Framework for the Control of Geometric Variation on System Performance}\\
\vspace{.2in}
J. Benzaken, A. Doostan, J. A. Evans
\end{center}

\section{Abstract}
In this paper, we present a novel tolerance allocation algorithm for the assessment and control of geometric variation on system performance that is applicable to any system of partial differential equations. In particular, we parameterize the geometric domain of the system in terms of design parameters and subsequently measure the effect of design parameter variation on system performance. A surrogate model via a tensor representation is constructed to map the design parameter variation to the system performance.  A set of optimization problems over this surrogate model restricted to nested hyperrectangles represents the effect of prescribing design tolerances, where the maximizer of this restricted function depicts the worst-case member, i.e. the worst-case design. Moreover, the loci of these tolerance hyperrectangles with maximizers attaining, but not surpassing, the performance constraint represents the boundary to the feasible region of allocatable tolerances. Every tolerance in this domain is measured through a user-specified, weighted norm which is informed by design considerations such as cost and manufacturability. The boundary of the feasible set is elucidated as an immersed manifold of codimension one, over which a suite of optimization routines exist and are employed to efficiently determine an optimal feasible tolerance with respect to the specified measure. Examples of this algorithm are presented with applications to a plate with a hole described by two design parameters, a plate with a hole described by six design parameters, and an L-Bracket described by seventeen design parameters.

\section{Introduction}

The ultimate goal of Computer-Aided Engineering (CAE) and Computer-Aided Design (CAD) is to design, manufacture, and maintain a product which performs some intended task within specified performance constraints. Throughout the engineering design cycle and the service lifetime of the assembled product, it is expected that uncertainty due to manufacturing processes, fatigue, modeling assumptions, etc. will affect the overall system performance. Regardless of these uncertainties, the product is still expected to successfully perform its intended task. 

As a particularly relevant example, geometric variations which occur during manufacturing adversely affect part performance. However, these effects on performance are rarely, if ever, considered during part design in a rigorous manner. Instead, manufacturing tolerances are often prescribed as to only address the issue of goodness-of-fit between parts in an assembly \cite{chase1988design,chase1990least,choi2000optimal,ngoi1999optimum}. A number of approaches are utilized to find optimal tolerances for this purpose, including linear and nonlinear programming \cite{chase1990least}, integer programming \cite{feng1997robust}, genetic algorithms \cite{ji2000optimal}, simulated annealing \cite{zhang1993integrated}, and ant colony optimization \cite{ramesh2009concurrent}, and Monte Carlo methods are typically employed in efforts to ascertain the certainty of fit in a statistical sense \cite{heling2016connected,lin1997study,wu2009improved}. Parts which are not in compliance with these tolerances are then recycled or discarded, imposing additional costs on manufacturing processes. Wear and tear throughout the part life span further deteriorate part performance. These issues are often addressed in an \textit{a priori} manner via safety factors or in an \textit{a posteriori} manner using measurements periodically collected throughout the part life cycle to assess the overall condition of the system. In this manner, the status of the engineering system is not well-understood until thorough subsequent analyses are conducted which, if not performed in a timely manner, may result in catastrophic failure.

In this paper, we present a methodology for determining spaces of admissible design parameters, e.g. tolerances, \textit{a priori}, given acceptable performance metrics. This allows the engineer to know, simply through measurement, the compliance of the system with regards to performance constraints. Additionally, this methodology allows for the implementation of systems which can detect their own non-compliance. This is accomplished by parameterizing the solution to partial differential equations, and relevant quantities of interest therein, as a function of geometric design parameters. We leverage isogeometric analysis \cite{hughes2005isogeometric} and a flavor of the surrogate modeling methodology presented in \cite{benzaken2017rapid} to accomplish this.

Thereafter, Monte Carlo samples are taken throughout this parametric domain about some nominal design and subsequently, a surrogate model to the solution is constructed using a low-rank, separated representation. A set of domain restrictions over this surrogate model effectively emulates tolerances by permitting geometric deviations within some hyperrectangle about the nominal design. An optimization problem posed over the restricted surrogate yields the worst offender of the elements contained within a given hyperrectangle. Moreover, the largest hyperrectangle such that the worst offender remains within the prescribed performance constraint is associated with the optimal tolerance to allocate, which naturally has a strong dependence on the norm used to measure the size of the hyperrectangle.  As such, we refer to the above optimization problem as the optimal tolerance allocation problem.

The optimal tolerance allocation problem is interpreted herein as a manifold optimization. This manifold is of dimension one less than the number of design parameters used in the geometric parameterization. Moreover, it represents the loci of hyperrectangles whose worst offender is equivalent to the performance constraint that is considered. Provided a weighted norm that is informed by considerations such as design sensitivities or manufacturing costs, the point with the largest value in this measure is considered optimal. In this paper, we employ manifold gradient ascent and manifold conjugate gradient optimization routines to determine this optimal value due to their favorable computational cost as compared with non-gradient optimization methods, though other state-of-the-art optimization algorithms may certainly be used.

An outline of the paper is as follows.  In Section \ref{sec:parametric_pdes}, we discuss our approach for quantifying the impact of geometric variation on system response through the numerical solution of parametric partial differential equations.  In Section \ref{sec:opt_tol_problem}, we present the optimal tolerance allocation problem and our proposed solution strategy.  In Section \ref{sec:NumTests}, we examine the effectiveness of the proposed solution strategy for the optimal tolerance allocation problem using three application problems: a plate with a hole described by two design parameters, a plate with a hole described by six design parameters, and an L-Bracket described by seventeen design parameters.  Finally, in Section \ref{sec:conclusions}, we draw conclusions.

\section{Propagation of Geometric Variation}
\label{sec:parametric_pdes}

In computational mechanics, we typically seek a finite-dimensional approximation to a solution of a system of partial differential equations (PDEs). For simplicity, consider a general boundary value problem (BVP) of the form:
\begin{equation}
\begin{array}{rl}
\bm{\mathcal{L}} \left(\vec{u}\right) &= \bm{\mathcal{F}} \ \ \ \forall \ {\bf x} \in \Omega\\
\bm{\mathcal{B}}\left(\vec{u}\right) &= \bm{\mathcal{G}} \ \ \ \forall \ {\bf x} \in \Gamma,
\label{eqn:generalPDE}
\end{array}
\end{equation}
where $\bm{\mathcal{L}}(\cdot)$ is a differential operator, possibly nonlinear, $\bm{\mathcal{B}}(\cdot)$ is a boundary operator, $\Omega$ is the physical domain, and $\Gamma$ is the boundary of the physical domain.  Both the differential and boundary operators act on the unknown solution field $\vec{u} : \Omega \rightarrow \mathbb{R}^d$.  In the context of structural mechanics, $\vec{u}$ typically denotes the displacement field and the dimension $d = 2$ or $d = 3$.

We choose to employ isogeometric analysis (IGA) as the discretization procedure throughout this paper, due to its natural parametric modeling framework \cite{benzaken2017rapid,herrema2017framework,hsu2015interactive}. However, the methodology presented herein is amenable to any numerical discretization choice and solution procedures, provided there exists an explicit parametrization between design parameters and analysis quantities of interest. In IGA, we use the same ${n_{dof}}$ basis functions for design and analysis: 
\begin{equation}
\text{Geometry:} \hspace{10pt} {\bf x}(\bm{\xi}) = \sum_{i=1}^{n_{dof}} \vec{P}_i \hat{N}_i (\bm{\xi}); \hspace{50pt} \text{Displacement:} \hspace{10pt} \vec{u}^h({\bf x}) = \sum_{i=1}^{n_{dof}} \vec{d}_i N_i({\bf x}),
\label{eqn:IGAdisp}
\end{equation}
where $\hat{N}_i(\bm{\xi})$ and $N_i({\bf x})$ are the Non-Uniform Rational B-Spline (NURBS) basis functions defined over the parametric and spatial domains, respectively, and the terms $\vec{P}_i, \vec{d}_i \in \mathbb{R}^{d \times n_{dof}}$ are referred to as \emph{control points} and \emph{control variables}, respectively.  The NURBS basis functions in physical space are defined as a push forward of the NURBS basis functions in parametric space, i.e. $N_i({\bf x}) = \hat{N}_i({\bf x}^{-1}({\bf x}))$, in accordance with the \textit{isoparametric concept}. With admissible trial functions defined, we invoke Galerkin's method by multiplying the PDE system by a corresponding NURBS weighting function and integrating by parts.  This yields a residual system of the form:
\begin{equation}
 \textbf{R} \left({\bf d}\right) = {\bf 0},
\end{equation}
where $\textbf{R}$ is a vector of residuals and $\textbf{d} = \left[ \vec{d}^T_1 \ \vec{d}^T_2 \ \ldots \ \vec{d}^T_{n_{dof}} \right]^T$ is a solution vector of displacement coefficients or control variables. In the linear setting, this equation reduces further to the linear system:
\begin{equation}
\textbf{K} \textbf{d} = \textbf{F},
\label{eqn:LinSolve}
\end{equation}
where $\textbf{K}$ is the system stiffness matrix and $\textbf{F}$ is the system forcing vector.  To assemble and solve this system, finite elements are constructed through a process known as \emph{B\'ezier extraction}, where a transformation, referred to as the extraction operator, is constructed that describes the B-spline basis locally in terms of the Bernstein polynomials \cite{borden2011isogeometric,scott2011isogeometric}. These elements are then assembled in a global stiffness matrix and a global system solve is performed to obtain the displacement vector. For an in-depth overview of IGA and its implementation, the reader is referred to \cite{cottrell2009isogeometric}.

We can explore the impact of changing design parameters on the resulting solution using parametric PDEs. Specifically, we may consider problem parameters such as material properties as well as geometric configurations. Formalizing this concept, let us construct a \emph{design space} $\mathcal{D} \subset \R^{d_\mu}$, where $d_\mu$ is the dimension of the parametric space associated with design parameters.  We refer to each member of $\mathcal{D}$, denoted as $\bm{\mu} \in \mathcal{D}$, as a \emph{design variable}, and it contains a selection of design parameters governing the material and geometric properties for a given design.  We assume throughout the paper that $\mathcal{D}$ is a hyperrectangle, that is, a Cartesian product of intervals: $\mathcal{D} = [a_1, b_1] \otimes [a_2,b_2] \otimes \ldots \otimes [a_{d_\mu},b_{d_\mu}]$. With this machinery, we are capable of parameterizing the PDE system \eqref{eqn:generalPDE} in terms of the design variable $\bm{\mu}$:
\begin{equation}
\begin{array}{rl}
\bm{\mathcal{L}} \left(\vec{u}(\bm{\mu});\bm{\mu}\right) &= \bm{\mathcal{F}}\left(\bm{\mu}\right) \ \ \ \forall \ {\bf x} \in \Omega_{\bm{\mu}}\\
\bm{\mathcal{B}} \left(\vec{u}(\bm{\mu});\bm{\mu}\right) &= \bm{\mathcal{G}}\left(\bm{\mu}\right) \ \ \ \forall \ {\bf x} \in \partial \Gamma_{\bm{\mu}},
\label{eqn:generalParaPDE}
\end{array}
\end{equation}
where $\Omega_{\bm{\mu}}$ and $\Gamma_{\bm{\mu}}$ denote the physical domain and boundary, respectively, that are parametrized by $\bm{\mu}$. In the isogeometric setting, the geometric description and discrete solution to such a problem can be expressed analogously to Eq. \eqref{eqn:IGAdisp} as:
\begin{equation}
\text{Geometry:} \hspace{10pt} {\bf x}(\bm{\xi}, \bm{\mu}) = \sum_{i=1}^{n_{dof}} \vec{P}_i (\bm{\mu}) \hat{N}_i (\bm{\xi}); \hspace{50pt} \text{Displacement:} \hspace{10pt} \vec{u}^h({\bf x}, \bm{\mu}) = \sum_{i=1}^{n_{dof}} \vec{d}_i (\bm{\mu}) N_i({\bf x}).
\label{eqn:parametricSolution}
\end{equation}
The effect of $\bm{\mu}$ on the physical basis functions is known through the isoparametric concept, that is, the bijective mapping between the parametric and physical domains. Therefore, the $\bm{\mu}$-dependancy in the solution field is rendered solely a function of the control variables $\vec{d}_i(\bm{\mu})$ which are determined by an analogous set of nonlinear algebraic equations:
\begin{equation}
\textbf{R} \left( \bm{\mu}, {\bf d}(\bm{\mu})  \right) = {\bf 0} \hspace{15pt} \forall \ \bm{\mu} \in \mathcal{D}
\label{eqn:nonlinearRes}
\end{equation}
where ${\bf R}(\bm{\mu}, {\bf d}(\bm{\mu}))$ is a vector of residuals and ${\bf d}(\bm{\mu})$ is a solution vector collecting the unknown control variables. Once again, in the linear setting, the solution of \eqref{eqn:nonlinearRes} reduces to solving the system:
\begin{equation}
\textbf{K}(\bm{\mu}) \ {\bf d}(\bm{\mu}) = \textbf{F}(\bm{\mu}) \hspace{15pt} \forall \ \bm{\mu} \in \mathcal{D}.
\label{eqn:parametricPDE}
\end{equation}
The discretized PDE systems \eqref{eqn:nonlinearRes} and \eqref{eqn:parametricPDE} provide a vessel to explore the propagation of geometric variations through the PDE. Namely, for every $\bm{\mu}$, the solution ${\bf d}(\bm{\mu})$ is the full parametric system response. Moreover, we will utilize the parameterizations presented herein to ultimately devise a methodology to assess system performance as a function of design variable, of which we begin discussion in the following section. 

\section{Tolerance Allocation}
\label{sec:opt_tol_problem}

The focus of this paper is on solving the \textit{tolerance allocation problem}, i.e. the allocation of tolerances such that some measure of performance is satisfied \cite{chase1999tolerance}.  Our solution to the tolerance allocation problem emanates naturally from the isogeometric design space exploration methodology presented in \cite{benzaken2017rapid}, and for the sake of consistency, we employ a similar notation throughout this paper. Therein, the authors presented a framework for obtaining surrogate models to the solution vector of the discretized PDE and hence the full system response as a function of the design variable, $\bm{\mu}$. In contrast to the design space exploration framework, rather than a full-system response, we are instead interested in the \emph{system performance} $\mathcal{Q}(\bm{\mu})$, a scalar-valued function which provides a quantity of interest (e.g. maximum stress) as a function of the design variable. 

We are particularly interested in controlling allowable geometric deviations such that any realized design conforms to a prescribed performance constraint, which we denote $\mathcal{Q}_\text{allow}$.  In this direction, we may associate a \emph{tolerance variable} $\bm{\tau} \in \R^{d_\mu}_+$ to a \emph{nominal design}, denoted by the design variable $\hat{\bm{\mu}} \in \R^{d_\mu}$.  We assume that $\mathcal{Q}(\hat{\bm{\mu}}) < \mathcal{Q}_\text{allow}$, that is, the nominal design does not violate the prescribed performance constraint.  A second design, denoted by $\tilde{\bm{\mu}} \in \R^{d_\mu}$, is said to be within the prescribed tolerance of the nominal design if $\left| \hat{\mu}_i - \tilde{\mu}_i \right| \le \tau_i$ for $i = 1,2,\ldots, d_\mu$. If all designs within the prescribed tolerance of the nominal design satisfy $\mathcal{Q}(\tilde{\bm{\mu}}) \le \mathcal{Q}_\text{allow}$, then they all conform to the prescribed performance constraint.  Unfortunately, while this point of view elucidates whether a particular tolerance variable is allowable or not, it does not yield insight as to how to define nor how to find an \textit{optimal tolerance variable} among the full space of tolerance variables.  To do so, we need to introduce a few additional concepts.  First, for each tolerance variable $\bm{\tau} \in \R^{d_\mu}_+$, let us associate a \emph{tolerance hyperrectangle}:
\begin{align}
\mathcal{D}_{\hat{\bm{\mu}}}(\bm{\tau}) := \left\{ \bm{\mu} \in \R^{d_\mu} : \left| \mu_i - \hat{\mu}_i \right| \le \tau_i , \ i = 1,2,\ldots,d_\mu \right\}.
\label{eqn:tolHypRect}
\end{align}
This hyperrectangle formally characterizes the space of designs which deviate from a nominal design $\hat{\bm{\mu}}$ within the threshold specified by $\bm{\tau}$. Note that sequences of hyperrectangles of this type are in fact nested. In particular, given some $\hat{\bm{\mu}}$ and tolerance variables $\bm{\tau}_1,\bm{\tau}_2$ with $(\bm{\tau}_1)_i \le (\bm{\tau}_2)_i$ for $i = 1,2,\ldots, d_\mu$, it necessarily follows that $\mathcal{D}_{\hat{\bm{\mu}}}(\bm{\tau}_1) \subseteq \mathcal{D}_{\hat{\bm{\mu}}}(\bm{\tau}_2)$.  Next, for each tolerance variable $\bm{\tau} \in \R^{d_\mu}_+$, let us associate a \textit{performance measure}:
\begin{align}
\mathcal{G}(\bm{\tau}) := \max_{\bm{\mu} \in \mathcal{D}_{\hat{\bm{\mu}}}(\bm{\tau})} \mathcal{Q}(\bm{\mu}).
\label{eqn:tolConstraint}
\end{align}
While the quantity $\mathcal{Q}(\hat{\bm{\mu}})$ characterizes the system performance of the nominal design, the quantity $\mathcal{G}(\bm{\tau})$ alternatively characterizes the worst-case system performance among all designs within the prescribed tolerance of the nominal design.  If it holds that $\mathcal{G}(\bm{\tau}) \leq \mathcal{Q}_\text{allow}$, then the tolerance variable $\bm{\tau}$ is allowable.  Note that the performance measure is monotonic in each of its components.  That is, $\mathcal{G}(\bm{\tau}_1) \leq \mathcal{G}(\bm{\tau}_2)$ for any two tolerance variables $\bm{\tau}_1,\bm{\tau}_2$ with $(\bm{\tau}_1)_i \le (\bm{\tau}_2)_i$ for $i = 1,2,\ldots, d_\mu$.  Finally, let $\mathcal{F}(\bm{\tau})$ denote a prescribed \textit{tolerance measure} which returns a measure of the size of a tolerance variable.  We assume throughout this paper that the tolerance measure is also monotonic in each of its components.  We will later review potential choices for the tolerance measure $\mathcal{F}(\bm{\tau})$.  With all the above concepts established, we are finally ready to define the optimal tolerance variable.  Namely, the optimal tolerance variable is that which maximizes the tolerance measure over all acceptable tolerance variables.  Mathematically, it is the solution of the following optimization problem:
\begin{framed}
Given $\hat{\bm{\mu}} \in \R^{d_\mu}_+$, find $\hat{\bm{\tau}}$ such that
\begin{align}
\hat{\bm{\tau}} = \argmax_{\bm{\tau} \in \mathcal{T}_\text{allow}} \mathcal{F}(\bm{\tau}), \hspace{15pt} \text{where} \hspace{15pt} \mathcal{T}_\text{allow} := \left\{ \bm{\tau} \in \R^{d_\mu}_+ : \mathcal{G}(\bm{\tau}) \le \mathcal{Q}_\text{allow}\right\}.
\label{eqn:OptProb}
\end{align}
\end{framed}

We refer to the above as the \textit{optimal tolerance allocation problem}.  It should be noted the above is a \textit{worst-case tolerance allocation problem} as the corresponding performance measure is associated with worst-case system performance.  One may alternatively consider a \textit{statistical tolerance allocation problem} wherein a statistical performance measure is employed.  The framework presented herein may be easily extended to statistical tolerance allocation, though other surrogate modeling methodologies, such as those based on polynomial chaos expansions, may be better suited for such a setting.  It should also be noted that the problem of optimal tolerance allocation is closely related to the problems of \textit{robust optimal design}, wherein the nominal design itself is optimized in the presence of uncertainty \cite{parkinson1993general}, and \textit{process capability optimization}, wherein both the means and variances of process specifications are optimized \cite{flaig2002process}.

While the optimal tolerance allocation problem given by \eqref{eqn:OptProb} succinctly characterizes an optimal tolerance variable, there are three outstanding concerns associated with both its definition and solution:

\begin{enumerate}
\item[1.]{Since each function call to $\mathcal{G}(\bm{\tau})$ is a demanding optimization over $\bm{\mu}$, how can one mitigate this seemingly unavoidable computational expense?}
\item[2.]{Since the choice of $\mathcal{F}(\bm{\tau})$ may dramatically affect the resulting $\hat{\bm{\tau}}$, what is an appropriate choice for this measure?}
\item[3.]{How can one arrive at $\hat{\bm{\tau}}$, the solution to the optimization problem Eq. \eqref{eqn:OptProb} which we have outlined above?}
\end{enumerate}

We address these issues in the following subsections and present our solutions to each.

\subsection{Low-Rank, Separated Representation of System Performance}
\label{section:SRConstruct}

Given a tolerance variable $\bm{\tau}$, the performance measure $\mathcal{G}(\bm{\tau})$ searches the corresponding tolerance hyperrectangle for the entry which maximizes the system performance $\mathcal{Q}(\bm{\mu})$, a rather costly optimization procedure. This is due to the necessity of a global system construction and subsequent solve of the isogeometric discretization for each $\bm{\mu}$. Moreover, this computational cost is only compounded in optimization routines over the tolerance variable, in which $\mathcal{G}(\bm{\tau})$ must be evaluated several times.  The goal of this subsection is to construct an economical and numerically stable model for the quantity $\mathcal{Q}(\bm{\mu})$. 

To alleviate the inherent computational expense of computing $\mathcal{Q}(\bm{\mu})$, we resort to constructing a surrogate model, analogous to those considered in \cite{benzaken2017rapid}. However, the nodal and modal surrogate models considered therein suffer from the notorious curse of dimensionality, where a linear increase in model fidelity demands an exponential increase in required sample realizations. Additionally, high-fidelity orthogonal polynomial expansions are comprised of many terms, consequently increasing computational expense for such constructions. Therefore, we instead adopt a technique emanating from the tensor approximation community known as \emph{low-rank, separated representation} \cite{beylkin2002numerical,beylkin2005algorithms}. That is, we employ a representation of the form:
\begin{align}
 \mathcal{Q}(\bm{\mu}) \approx \tilde{\mathcal{Q}}_{r}(\bm{\mu}) = \sum_{\ell = 1}^r s_\ell \mathscr{G}_{\ell}(\bm{\mu}) \hspace{15pt} \text{where} \hspace{15pt} \mathscr{G}_{\ell}(\bm{\mu}) = \prod_{i=1}^{d_\mu} g_{\ell}^i(\mu_i)
 \label{eqn:SR}
\end{align}
for a surrogate model to $\mathcal{Q}(\bm{\mu})$. The separation rank, $r$, is chosen to be relatively small, hence the term \textit{low-rank}, mitigating the stability and economic concerns presented above.  The coefficients $s_\ell$ are constants which enforce any normalization preferences, e.g. $\|g_{\ell}^i\| = 1$.  Unlike in standard approximation where the basis functions are chosen \textit{a priori}, in separated representation, the basis functions (or factors) $\mathscr{G}_{\ell}(\bm{\mu})$ and the separation rank $r$ are computed such that $\tilde{\mathcal{Q}}_r(\bm{\mu})$ is as close as possible to $\mathcal{Q}(\bm{\mu})$ (in a sense to be specified below).  This, along with the tensor-product construction of the factors $\mathscr{G}_{\ell}(\bm{\mu})$ in \eqref{eqn:SR}, renders the construction of separated representation as a non-linear optimization problem with various solution approaches as outlined in \cite{beylkin2009multivariate, doostan2013non}.  A discrete formulation of this optimization problem can be obtained by approximating each univariate factor $g_{\ell}^i(\mu_i)$ in, for instance, a polynomial basis $\{\mathscr{L}_j(\mu_i)\}_{j=0}^{p}$.  This yields the approximation:
\begin{align}
g_{\ell}^i(\mu_i) \approx \sum_{j=0}^p c_{\ell,j}^i \mathscr{L}_j(\mu_i),
\end{align}
and we denote the resulting surrogate model by $\tilde{\mathcal{Q}}_{r,p}(\bm{\mu})$.  In the present study we choose $\mathscr{L}_j$ to be, up to a shifting, the Legendre polynomial of degree $j$.  With this discretization, the problem of constructing a separated representation of $\mathcal{Q}(\bm{\mu})$ simplifies to that of computing the coefficients $c_{\ell,j}^i$ as well as the separation rank $r$.  To this end, one approach utilizes an alternating least-squares (ALS) routine, e.g. see \cite{bro1997parafac,de2000best,kroonenberg1980principal,sibileau2018explicit}, which minimizes the usual least-squares error between $N$ data points $\left\{ \left( \bm{\mu}^{(j)}, \mathcal{Q}^{(j)} \right) \right\}_{j=1}^N$ and the minimizer $\tilde{\mathcal{Q}}_{r,p}(\bm{\mu})$:
\begin{align}
\| \{ ( \bm{\mu}^{(j)}, \mathcal{Q}^{(j)} ) \}_{j=1}^N - \tilde{\mathcal{Q}}_{r,p}(\bm{\mu}^{(j)}) \|^2 = \sum_{j=1}^N (\mathcal{Q}^{(j)} - \tilde{\mathcal{Q}}_{r,p}(\bm{\mu}^{(j)}))^2
\label{eqn:LS}
\end{align}
where $\bm{\mu}^{(j)} = (\mu_1^{(j)},\mu_2^{(j)},\ldots,\mu_{d_\mu}^{(j)})$ and $\mathcal{Q}^{(j)} = \mathcal{Q}( \bm{\mu}^{(j)})$.  The ALS  scheme approximates the optimization of \eqref{eqn:LS} -- a nonlinear program -- via a sequence of linear optimization problems over each dimension $\mu_i$ one at a time. In particular, for a fixed $r$, ALS minimizes \eqref{eqn:LS} over $\{c_{\ell,j}^{i}\}$ while freezing the coefficients $\{c_{\ell,j}^{k}\}$, $k\ne i$, along other directions at their current values. It then alternates over all other directions to corresponding coefficients. This process is repeated multiple times until the residual in \eqref{eqn:LS} does not change much. In the case the achieved residual is larger than a prescribed tolerance, $r$ is increased and the ALS process is repeated. It should be noted several approaches have been recently proposed to improve the performance of ALS \cite{battaglino2018practical,doostan2013non,reynolds2016randomized}.  Moreover, other orthogonal polynomials, e.g. Hermite, may be employed to consequently induce a non-uniform weighting on the sample space \cite{xiu2002wiener}.

Low-rank, separated representations are particularly attractive for optimal tolerance allocation for two reasons.  First, the low-rank nature of separated representations affords them numerically stable and economical evaluation, since they are comprised of relatively few terms in comparison to potentially thousands in an orthogonal polynomial expansion.  Second, there exist highly efficient and stable algorithms for computing the maximum value reached by a low-rank, separated representation over a hyperrectangle \cite{reynolds2017optimization}.  This is highly convenient for optimal tolerance allocation since computing the performance measure $\mathcal{G}(\bm{\tau})$ for a particular tolerance variable $\bm{\tau}$ involves computing the maximum system performance $\mathcal{Q}(\bm{\mu})$ over the tolerance hyperrectangle $\mathcal{D}_{\hat{\bm{\mu}}}(\bm{\tau})$.

\subsection{Construction of a Sampling Domain for Surrogate Model Construction}
\label{section:hyperrectangleConstruct}

In order to construct a low-rank, separated representation of the system performance $\mathcal{Q}(\bm{\mu})$, one must evaluate $\mathcal{Q}(\bm{\mu})$ for a large sample of design variables.  Thus the problem of constructing a suitable \textit{sampling domain} $\mathcal{D}_{\text{sample}} \subseteq \mathcal{D}$ arises.  If the sampling domain is chosen to be either too large or too small, then the resulting surrogate model $\tilde{\mathcal{Q}}_{r,p}(\bm{\mu})$ will be a poor approximation of the true system performance $\mathcal{Q}(\bm{\mu})$ for the design variables of interest.  To arrive at a suitable sampling domain, we require that it at least enclose every tolerance hyperrectangle associated with a tolerance variable in the space of allowable tolerance variables, viz.:
\begin{align}
\left\{ \mathcal{D}_{\hat{\bm{\mu}}}(\bm{\tau}): \bm{\tau} \in \mathcal{T}_{\text{allow}} \right\} \subseteq \mathcal{D}_{\text{sample}}.
\end{align}
To meet this constraint without resorting to an excessively large domain, we choose the sampling domain to be the smallest hyperrectangle enclosing every tolerance hyperrectangle, viz.:
\begin{align}
\mathcal{D}_{\text{sample}} := \otimes_{i=1}^{d_\mu} \left[ \hat{\mu}_i - \left( \bm{\tau}_{\text{max}} \right)_i, \hat{\mu}_i + \left( \bm{\tau}_{\text{max}} \right)_i \right]
\end{align}
where:
\begin{align}
\left( \bm{\tau}_{\text{max}} \right)_i := \max \left\{ \tau_i : \bm{\tau} \in \mathcal{T}_{\text{allow}} \right\}.
\end{align}
Note that $\left( \bm{\tau}_{\text{max}} \right)_i$ is the largest possible size of the $i^{\text{th}}$ side of a tolerance hyperrectangle associated with a tolerance variable in the space of allowable tolerance variables.  Therefore, the following nestedness property holds:
\begin{align}
\mathcal{T}_{\text{allow}} \subseteq \otimes_{i=1}^{d_\mu} \left[ 0, \left( \bm{\tau}_{\text{max}} \right)_i \right].
\end{align}
We will later exploit the above property when constructing algorithms for finding the optimal tolerance variable.  In particular, we will restrict our search for optimal tolerances within the \textit{tolerance bounding box}:
\begin{align}
\mathcal{T}_{\text{bounding}} := \otimes_{i=1}^{d_\mu} \left[ \left( \bm{\tau}_{\text{min}} \right)_i, \left( \bm{\tau}_{\text{max}} \right)_i \right],
\end{align}
where $\bm{\tau}_\text{min}$ encodes potential user-specified lower bounds on the allowable tolerance variable.  Unless otherwise specified,  $\bm{\tau}_\text{min} = \bm{0}$.  Now, since the extent of the space of allowable tolerance variables is not known \textit{a priori}, neither is $\left( \bm{\tau}_{\text{max}} \right)_i$ for $i = 1, \ldots, d_\mu$.  Fortunately, since tolerance hyperrectangles are necessarily nested and the performance measure is monotonic in each of its components, it follows that:
\begin{align}
\left( \bm{\tau}_{\text{max}} \right)_i = \max \left\{ \tau_i : \bm{\tau} \in \mathcal{T}_{\text{allow}} \text{ and } \tau_j = 0 \text{ for } j \neq i \right\}.
\end{align}
Thus, $\left( \bm{\tau}_{\text{max}} \right)_i$ can be obtained by solving a univariate optimization problem over the space of allowable tolerances.  In fact, $\left( \bm{\tau}_{\text{max}} \right)_i$ can be found in terms of a univariate optimization problem over the space of design variables as illustrated by the following proposition.

\begin{framed}
\begin{proposition}
Define $q_i(\mu) := \mathcal{Q}(\hat{\bm{\mu}} + \mu{\bf e}_i)$ where ${\bf e}_i$ is the unit vector in the $i^{\text{th}}$ coordinate direction.  If $q_i(\mu) \in C^1(\mathbb{R})$, it holds that $\left( \bm{\tau}_{\text{max}} \right)_i = \left| \mu^*_i \right|$ where:
\begin{align}
\mu^*_i = \argmin_\mu \frac{1}{2} \left| \mu \right|^2 \hspace{15pt} \text{such that} \hspace{15pt} \left\{ \begin{array}{rl} q_i(\mu) &= \mathcal{Q}_\text{allow}\\
\frac{dq_i(\mu)}{d\mu} \cdot \sgn\left( \mu \right) &> 0 \end{array} \right.
\label{eqn:1Dopt}
\end{align}
for $i=1,2,\ldots,d_\mu$.
\end{proposition}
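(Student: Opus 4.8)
The plan is to reduce the $d_\mu$-dimensional description of $\left( \bm{\tau}_{\text{max}} \right)_i$ to a one-dimensional sublevel-set problem along the $i^{\text{th}}$ coordinate axis, and then to identify the minimizer of \eqref{eqn:1Dopt} with the boundary of that sublevel set. First I would use the identity established just above the proposition, $\left( \bm{\tau}_{\text{max}} \right)_i = \max\{\tau_i : \bm{\tau}\in\mathcal{T}_{\text{allow}},\ \tau_j = 0 \text{ for } j\ne i\}$: when $\tau_j = 0$ for all $j \ne i$, the tolerance hyperrectangle collapses to the segment $\{\hat{\bm{\mu}} + \mu\,{\bf e}_i : |\mu| \le \tau_i\}$, so the performance measure becomes $\mathcal{G}(\bm{\tau}) = \max_{|\mu|\le\tau_i} q_i(\mu) =: Q_i(\tau_i)$, a continuous, nondecreasing function of $\tau_i$ with $Q_i(0) = q_i(0) = \mathcal{Q}(\hat{\bm{\mu}}) < \mathcal{Q}_\text{allow}$. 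Hence, in the nontrivial case where the performance constraint is eventually active, $\left( \bm{\tau}_{\text{max}} \right)_i = T$, where $T := \sup\{t\ge 0 : q_i(\mu)\le\mathcal{Q}_\text{allow}\ \text{for all}\ |\mu|\le t\}$; by continuity $q_i\le\mathcal{Q}_\text{allow}$ on $[-T,T]$ with equality attained at some binding point $\mu^\sharp$ of modulus $|\mu^\sharp| = T$. (If the constraint is never active then $\left( \bm{\tau}_{\text{max}} \right)_i = +\infty$ and the feasible set of \eqref{eqn:1Dopt} is empty, so the claim holds under the standard conventions.)

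Next I would verify that $\mu^\sharp$ is feasible for \eqref{eqn:1Dopt}. The equality constraint $q_i(\mu^\sharp) = \mathcal{Q}_\text{allow}$ holds by construction. For the derivative constraint, note that $q_i(\mu) \le \mathcal{Q}_\text{allow} = q_i(\mu^\sharp)$ for every $\mu$ lying between $0$ and $\mu^\sharp$, so the one-sided difference quotients of $q_i$ at $\mu^\sharp$ from the nominal side are nonnegative, which gives $\frac{dq_i}{d\mu}(\mu^\sharp)\,\sgn(\mu^\sharp) \ge 0$; the definition of $T$ as a supremum forces $q_i$ to exceed $\mathcal{Q}_\text{allow}$ at points arbitrarily close to $\mu^\sharp$ on the side away from the nominal, and, invoking $q_i\in C^1(\mathbb{R})$ together with transversality of the crossing (see the final remark below), this upgrades the inequality to $\frac{dq_i}{d\mu}(\mu^\sharp)\,\sgn(\mu^\sharp) > 0$. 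Therefore $\mu^\sharp$ is feasible and $|\mu^*_i| \le |\mu^\sharp| = T$.

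To obtain the matching lower bound $|\mu^*_i| \ge T$, I would show that no $\mu$ with $0 < |\mu| < T$ is feasible. If such a $\mu$ satisfied $q_i(\mu) = \mathcal{Q}_\text{allow}$ and $\frac{dq_i}{d\mu}(\mu)\,\sgn(\mu) > 0$, then for all sufficiently small $\delta > 0$ one would have $q_i(\mu + \delta\,\sgn(\mu)) > \mathcal{Q}_\text{allow}$ while $|\mu + \delta\,\sgn(\mu)| = |\mu| + \delta < T$, contradicting $q_i \le \mathcal{Q}_\text{allow}$ on $[-T,T]$. Combining this with the previous paragraph gives $|\mu^*_i| = T = \left( \bm{\tau}_{\text{max}} \right)_i$, as asserted.

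The step I expect to be the main obstacle, and the only place the $C^1$ hypothesis genuinely enters, is establishing the strict sign $\frac{dq_i}{d\mu}(\mu^\sharp)\,\sgn(\mu^\sharp) > 0$ at the binding point. With continuity and differentiability alone one obtains only the weak inequality, and a degenerate tangency — where $q_i$ touches the level $\mathcal{Q}_\text{allow}$ with vanishing slope — would simultaneously move the binding point off the boundary $\{|\mu| = T\}$ and break feasibility of \eqref{eqn:1Dopt}. Ruling this out amounts to requiring that $\mathcal{Q}_\text{allow}$ be a regular value of $q_i$ along the $i^{\text{th}}$ axis, a generic non-degeneracy condition that should be stated alongside $q_i \in C^1(\mathbb{R})$; granted it, the transversal crossing furnishes exactly the strict inequality needed and the two bounds close the argument.
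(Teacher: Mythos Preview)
Your argument is correct and follows essentially the same two-part structure as the paper's proof: the paper starts from $\tau^* := |\mu^*_i|$ and shows (i) $\tau^*$ is allowable (by contradiction, producing a closer feasible point) and (ii) $\tau^* + \epsilon$ is not allowable (using the strict derivative condition at $\mu^*_i$), which is logically the mirror image of your upper- and lower-bound pair built around $T := (\bm{\tau}_{\text{max}})_i$.

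The one place your write-up goes further than the paper is in flagging the transversality issue: the paper's step (i) asserts, without justification, that if $q_i$ exceeds $\mathcal{Q}_\text{allow}$ somewhere in $[-\tau^*,\tau^*]$ then there exists a crossing point $\tilde\mu$ with strictly positive outward derivative, which is exactly the non-degeneracy condition you isolate. Your observation that $C^1$ alone only yields the weak inequality, and that a regular-value hypothesis on $\mathcal{Q}_\text{allow}$ is implicitly needed, is a genuine refinement of the paper's argument rather than a different route.
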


\begin{proof} Let $\tau^* = \left| \mu^*_i - \hat{\mu}_i \right|$ where:
\begin{align}
\mu^*_i = \argmin_\mu \frac{1}{2} \left| \mu - \hat{\mu}_i \right|^2 \hspace{15pt} \text{such that} \hspace{15pt} \left\{ \begin{array}{rl} q_i(\mu) &= \mathcal{Q}_\text{allow}\\
\frac{dq_i(\mu)}{d\mu} \cdot \sgn\left( \mu \right) &> 0. \end{array} \right.
\end{align}
It suffices to show two things: (i) $\bm{\tau}^*$ with $\tau^*_i = \tau^*$ and $\tau^*_j = 0$ for $j \neq i$ is a member of $\mathcal{T}_{\text{allow}}$ and (ii) any $\bm{\tau}$ with $\tau_i > \tau^*$ and $\tau_j = 0$ for $j \neq i$ is not a member of $\mathcal{T}_{\text{allow}}$.  We first prove (i) by showing that $q_i(\mu) \leq \mathcal{Q}_\text{allow}$ for all $|\mu| \leq \tau^*$.  Suppose instead that $q_i(\mu) > \mathcal{Q}_\text{allow}$ for some $|\mu| \leq \tau^*$.  Then there is some $|\tilde{\mu}| < \tau^*$ such that $q_i(\tilde{\mu}) = \mathcal{Q}_\text{allow}$ and $\frac{dq_i(\tilde{\mu})}{d\mu} \cdot \sgn\left( \tilde{\mu} \right) > 0$.  This violates the definition of $\mu^*_i$, so (i) holds. To prove (ii), note that $q_i(\mu^*_i+\epsilon \sgn\left( \mu \right)) > \mathcal{Q}_\text{allow}$ for any $\epsilon > 0$ since $q_i(\mu^*_i) = \mathcal{Q}_\text{allow}$ and $\frac{dq_i(\mu^*_i}{d\mu} \cdot \sgn\left( \mu^*_i \right) > 0$.  Thus any $\bm{\tau}$ with $\tau_i > \tau^*$ and $\tau_j = 0$ for $j \neq i$ is necessarily not a member of $\mathcal{T}_{\text{allow}}$.
\end{proof}
\end{framed}

As opposed to the optimal tolerance allocation problem, the univariate optimization problem given by \eqref{eqn:1Dopt} can be efficiently solved using Newton's method or a quasi-Newton method.  Consequently, $\bm{\tau}_{\text{max}}$ may be efficiently computed without resorting to a surrogate model for the system performance $\mathcal{Q}(\bm{\mu})$.  This is critical to the functionality of our methodology for solving the optimal tolerance allocation problem, since we construct the surrogate model $\tilde{\mathcal{Q}}_{r,p}(\bm{\mu})$ based on sampling the domain $\mathcal{D}_{\text{sample}}$ whose size is dictated by $\bm{\tau}_{\text{max}}$.

\subsection{Tolerance Measures}

Recall that the objective of the optimal tolerance allocation problem is to find the optimal tolerance variable $\hat{\bm{\tau}}$ in the space of allowable tolerance variables $\mathcal{T}_{\text{allow}}$ which maximizes the tolerance measure $\mathcal{F}(\bm{\tau})$.  Consequently, the precise specification of the tolerance measure has a profound impact on the solution of the optimal tolerance allocation problem.  Perhaps the simplest possible specification is a weighted sum of the individual components of the tolerance variable.  This leads to a tolerance measure that is easy to study, and the weights associated with this specification may be tied, for example, to anticipated cost or design parameter sensitivity.  Another means of specifying the tolerance measure is to directly relate it to manufacturing cost $C(\bm{\tau})$.  Namely, if one seeks to minimize manufacturing cost, the tolerance measure should vary inversely with the cost, i.e. $\mathcal{F}(\bm{\tau}) \sim 1/C(\bm{\tau})$.  It is common to assume that manufacturing cost can be expressed as a sum of costs, each associated to a single component of the tolerance variable, viz.:
\begin{align}
C(\bm{\tau}) := \sum_{i=1}^{d_{\bm{\mu}}} c_i(\tau_i),
\end{align}
and there exist several candidate models for the cost associated with a single component of the tolerance variable \cite{chase1999tolerance}.  Perhaps the most popular model is the reciprocal power model:
\begin{align}
c_i(\tau_i) := a_i + b_i/\tau_i^{k_i}
\end{align}
where $a_i$, $b_i$, and $k_i$ are model constants which may be empirically determined for a particular application \cite{chase1990least}.  When $k_i = 1$, one recovers the reciprocal model of Chase and Greenwood \cite{chase1988design}, and when $k_i = 2$, one recovers the reciprocal squared model of Spotts \cite{spotts1973allocation}.  One particularly nice feature of the reciprocal power model is that it states manufacturing cost increases exponentially fast as tolerances are tightened, in accordance with engineering intuition.

In this paper, we examine three different model tolerance measures for optimal tolerance allocation.  Our purpose in doing so is to study the impact of tolerance measure specification on the form of the optimal tolerance variable as well as the effectiveness of the proposed numerical methodology for optimal tolerance allocation. The tolerance measures presented here are by no means exhaustive, nor are they empirically tied to manufacturing cost.  However, they are sufficiently diverse as to demonstrate the versatility of our methodology, and one measure we consider is related to the reciprocal model of manufacturing cost.
\begin{enumerate}
\item[1.]{The first measure considered, $\mathcal{F}_1(\bm{\tau})$, is perhaps the simplest choice:

\begin{equation}
\begin{aligned}
\mathcal{F}_1(\bm{\tau}) &:= \sum_{i=1}^{d_\mu} \tau_i.
\end{aligned}
\label{eqn:tolNorm_1}
\end{equation}

Note that $\mathcal{F}_1(\bm{\tau}) \equiv \| \bm{\tau}\|_1$. Intuitively, the selection of this tolerance functional will maximize the total tolerance available while complying to the performance constraint. However, this choice may lead to \textit{sparse} tolerance variables, i.e. tolerance variables with many components set to zero, in the presence of large discrepancies in the magnitudes of design parameter sensitivities.  We refer to this measure as the 1-norm.
}

\item[2.]{The second measure considered, $\mathcal{F}_{\bm{\mu}}(\bm{\tau})$, is a weighted sum of the individual components of the tolerance variable:
\begin{equation}
\begin{aligned}
\mathcal{F}_{\bm{\mu}}(\bm{\tau}) &:= \sum_{i=1}^{d_\mu} \left| \frac{\partial \mathcal{Q}}{\partial \mu_i}(\hat{\bm{\mu}}) \right| \tau_i
\end{aligned}
\label{eqn:tolNorm_mu}
\end{equation}
The weights $\alpha_i = \left| \partial_{\mu_i} \mathcal{Q}(\hat{\bm{\mu}}) \right|$ account for design parameter sensitivities revealed through the surrogate model.   As such, design parameters which are the most sensitive to perturbations are given prevalence in the tolerance allocation.  We refer to this measure as the $\bm{\mu}$-norm.
}

\item[3.]{The third and final measure considered, $\mathcal{F}_{-1}(\bm{\tau})$, is derived from the reciprocal model of manufacturing cost.  In particular, selecting $a_i = 0$ and $b_i = 1$ in the reciprocal model yields:
\begin{equation}
\begin{aligned}
\mathcal{F}_{-1}(\bm{\tau}) &:= \left( \sum_{i=1}^{d_\mu} \frac{1}{\tau_i} \right)^{-1}.
\end{aligned}
\label{eqn:tolNorm_-1}
\end{equation}
The above measure tends to avoid the selection of sparse tolerance variables (since sparse tolerance variables result in infinite manufacturing cost), and it also tends to result in \textit{isotropic} tolerance variables, i.e. tolerance variables whose components are comparable in size.  We refer to this measure as the $-1$-norm.
}
\end{enumerate}

It should be noted that in order for the above model tolerance measures to be well-defined, each of the design parameters should be expressed in terms of the same physical units.  We consider dimensionless design parameters in our later numerical experiments.

\subsection{Optimal Tolerance Allocation by Manifold Traversal}

We are now ready to discuss our algorithm for solving the optimal tolerance allocation problem.  The algorithm exploits advances in the field of manifold optimization, and in particular, we find the optimal tolerance variable on the surface of the following manifold describing the boundary of the space of allowable tolerance variables:
\begin{align}
\mathcal{M} := \left\{ \bm{\tau} \in \R^{d_{\bm{\mu}}}_+ : \mathcal{G}(\bm{\tau}) = \mathcal{Q}_\text{allow} \right\}.
\label{eqn:ManifoldDef}
\end{align}
However, classical optimization algorithms which operate in Euclidean space cannot be directly employed to find the optimal tolerance variable on $\mathcal{M}$ unless there exists a global parameterization from Euclidean space to the manifold.  Fortunately, many optimization algorithms have been generalized to the manifold setting by introducing an affine connection between tangent spaces corresponding to different points along a manifold \cite{absil2009optimization}.  Common to these algorithms is that, for every iteration, a tangent space $T_{\bm{\tau}} \mathcal{M}$ is constructed about a point $\bm{\tau} \in \mathcal{M}$. See, for instance, Fig. \ref{fig:man_illus}.

If we define $\mathcal{N}(\bm{\tau}) := \text{span} \left\{ \nabla_{\bm{\tau}} \mathcal{G}(\bm{\tau}) \right\}$ for $\bm{\tau} \in \mathcal{M}$, then it follows that $T_{\bm{\tau}}\mathcal{M} = \mathcal{N}(\bm{\tau})^\perp$.  Thus to characterize the tangent space $T_{\bm{\tau}}\mathcal{M}$, we need to be able to compute $\nabla_{\bm{\tau}} \mathcal{G}(\bm{\tau})$ in an efficient and accurate manner. Since $\mathcal{G}(\bm{\tau})$ is defined through an optimization problem and potentially exhibits sharp gradients, employing a finite difference stencil to approximate the gradient $\nabla_{\bm{\tau}} \mathcal{G}(\bm{\tau})$ is inefficient, inaccurate, and generally unstable.  As such, we compute $\nabla_{\bm{\tau}} \mathcal{G}(\bm{\tau})$ in an analytical manner. To do so, we first find the set of candidate $\bm{\mu}$-maximizers which define $\mathcal{G}(\bm{\tau})$:
\begin{align}
\mathcal{K}(\bm{\tau}) := \left\{ \bm{\mu} \in \mathcal{D}_{\hat{\bm{\mu}}}(\bm{\tau}) : \mathcal{Q}(\bm{\mu}) = \mathcal{G}(\bm{\tau}) \right\}.
\end{align}
We next find the candidate $\bm{\mu}$-maximizers which lie on the boundary of the tolerance hyperrectangle $\mathcal{D}_{\hat{\bm{\mu}}}(\bm{\tau})$, and we place them in the sets:
\begin{align}
\mathcal{K}_{\Gamma_i}(\bm{\tau}) := \left\{ \bm{\mu} \in \mathcal{K}(\bm{\tau}) : \left| \mu_i - \hat{\mu}_i\right| = \tau_i \right\}.
\end{align}
The components of the gradient $\nabla_{\bm{\tau}} \mathcal{G}(\bm{\tau})$ are then given by:
\begin{align}
\frac{\partial \mathcal{G}}{\partial \tau_i} (\bm{\tau}) = \left\{ \begin{array}{rl} 
\displaystyle \max_{\bm{\mu} \in \mathcal{K}_{\Gamma_i}(\bm{\tau})}  \max \left\{ \frac{\partial \mathcal{Q}}{\partial \mu_i} (\bm{\mu}) \cdot \sgn(\mu_i - \hat{\mu}_i), 0 \right\} ,& \mathcal{K}_{\Gamma_i}(\bm{\tau}) \neq \emptyset \\
0 ,& \mathcal{K}_{\Gamma_i}(\bm{\tau}) = \emptyset. \end{array} \right.
\end{align}
From the above expression, we see that $\frac{\partial \mathcal{G}}{\partial \tau_i} (\bm{\tau})$ is non-negative for $i = 1, \ldots, d_\mu$.  This is due to the fact that $\mathcal{G}(\bm{\tau})$ is monotonic in each of its components.  

\begin{figure}[t!]
\includegraphics{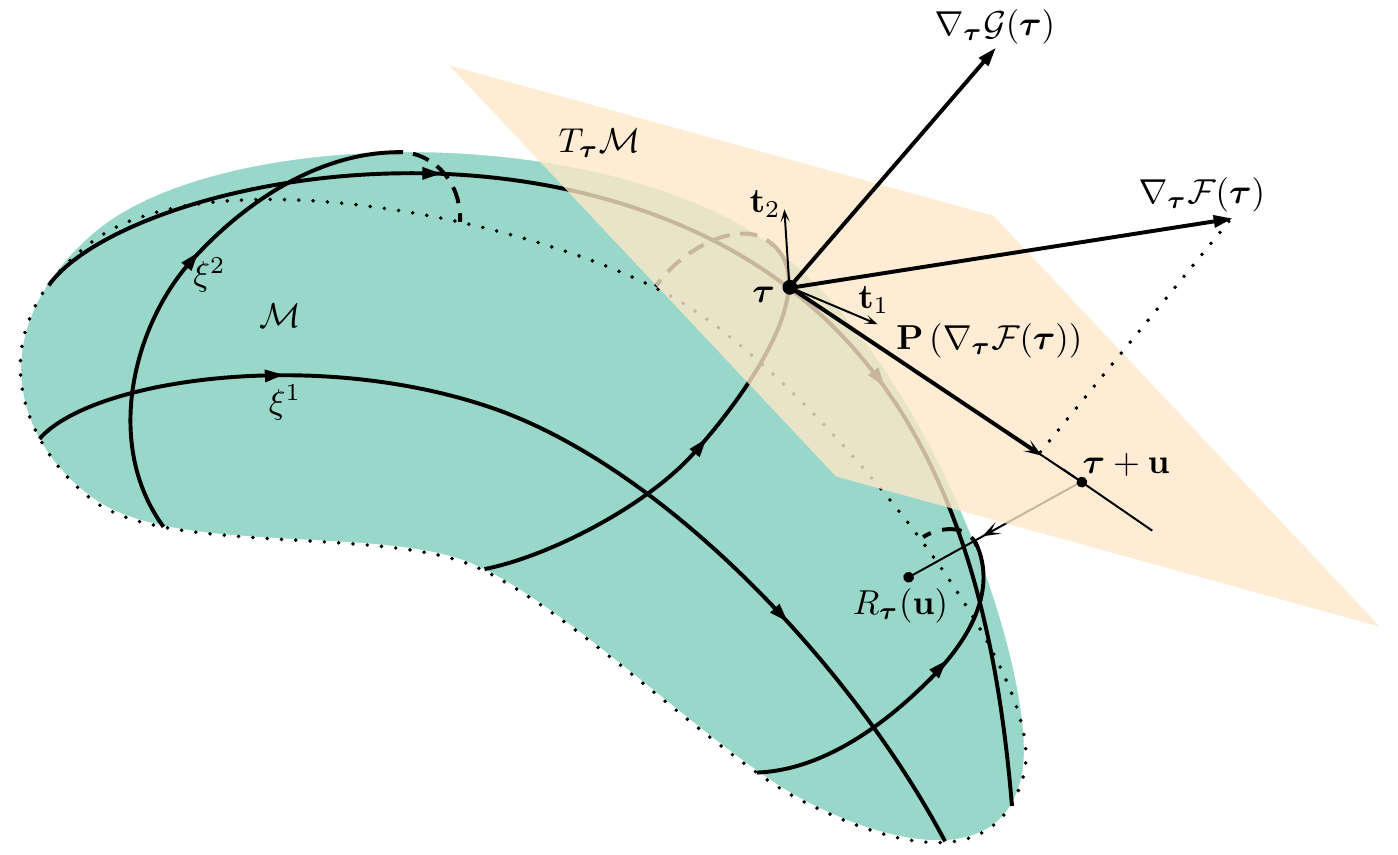}
\caption{An example manifold with a graphical illustration of the differential geometric tools presented in this section.}
\label{fig:man_illus}
\end{figure}

It necessarily holds that $\R^{d_{\bm{\mu}}} = \mathcal{N}(\bm{\tau}) \oplus T_{\bm{\tau}}\mathcal{M}$ for every $\bm{\tau} \in \mathcal{M}$. Let $\left\{ \bm{t}_i \right\}_{i=1}^{d_{\bm{\mu}} - 1} \in \R^{d_{\bm{\mu}}}$ denote an orthonormal basis of $T_{\bm{\tau}} \mathcal{M}$ and define ${\bf T} = \left[ {\bf t}_1 | {\bf t}_2 | \cdots | {\bf t}_{d_\mu-1} \right]$. Then for every $\bm{\tau} \in \R^{d_{\bm{\mu}}}$, there exists $b \in \R$ and $\bm{\xi} \in \R^{d_\mu-1}$ such that $\bm{\tau} = \bm{\eta} + b \hat{\bm{n}}$ where $\hat{\bm{n}} = \nabla_{\bm{\tau}} \mathcal{G}(\bm{\tau})/\| \nabla_{\bm{\tau}} \mathcal{G}(\bm{\tau}) \|$ and $\bm{\eta} = {\bf T} \bm{\xi}$. Intuitively, $\hat{\bf n}$ is the manifold normal at $\bm{\tau}$ and $\bm{\eta}$ is a vector in the tangent space with coordinates $\bm{\xi}$ in the ${\bf T}$ basis.  It is through the decomposition $\bm{\tau} = \bm{\eta} + b \hat{\bm{n}}$ that we are capable of performing manifold optimization solely in terms of tangent space entities. For example, with a gradient ascent method, we project $\nabla_{\bm{\tau}}\mathcal{F}(\bm{\tau})$ into the tangent space via the calculation ${\bf P} \left( \nabla_{\bm{\tau}}\mathcal{F}(\bm{\tau}) \right)$ where ${\bf P} = {\bf T} {\bf T}^T$ is the orthogonal projection matrix onto the tangent space.  This then provides the direction of steepest ascent.  An illustration of an example manifold $\mathcal{M}$, along with the necessary differential geometric tools to perform optimization, are shown in Fig. \ref{fig:man_illus}.

With a gradient ascent method, we progress along the direction of steepest ascent to improve the solution at each iteration.  However, any finite traversal along the tangent space $T_{\bm{\tau}} \mathcal{M}$ away from a point $\bm{\tau}$ on the manifold $\mathcal{M}$ will result in a departure from the manifold unless the manifold is flat.  Therefore, we require an operator which can take us back onto the manifold $\mathcal{M}$.  A so-called \emph{retraction} from the \textit{tangent bundle} $T\mathcal{M}: = \left\{ (\bm{\tau},{\bf{u}}) \in \mathbb{R}^{d_\mu} \times \mathbb{R}^{d_\mu}: {\bm{\tau}} \in \mathcal{M}, {\bf{u}} \in T_{\bm{\tau}} \mathcal{M} \right\}$ onto the manifold $\mathcal{M}$ is well-suited for this purpose. Retraction is formally defined below for our problem setting, and for further details, the reader is referred to \cite{smith1994optimization}.

\begin{framed}
\begin{definition}[Retraction]
A smooth mapping $R: T\mathcal{M} \rightarrow \mathcal{M}$ is said to be a \emph{retraction} if it satisfies the following properties where $R_{\bm{\tau}}$ is the restriction of $R$ to $\bm{\tau}$ (i.e. $R_{\bm{\tau}}({\bf{u}}) := R(\bm{\tau},{\bf{u}})$):
\begin{enumerate}
\item[(i)]{$R_{\bm{\tau}}(\bm{0}) = \bm{\tau}$ for all $\bm{\tau} \in \mathcal{M}$.}
\item[(ii)]{$DR_{\bm{\tau}}(\bm{0})[\bm{0}] = \text{id}_{T_{\bm{\tau}} \mathcal{M}}$ for all $\bm{\tau} \in \mathcal{M}$ where $D$ denotes the Fr\'{e}chet derivative operator and $\text{id}_{T_{\bm{\tau}} \mathcal{M}}$ is the identity mapping on $T_{\bm{\tau}} \mathcal{M}$.}
\end{enumerate}
\label{def:retraction}
\end{definition}
\end{framed}

The mathematically ideal retraction is the \textit{Riemannian exponential map} which maps a point $\bm{\tau} \in \mathcal{M}$ and tangent vector ${\bf{u}} \in T_{\bm{\tau}} \mathcal{M}$ to a point along a \textit{geodesic curve} on the manifold $\mathcal{M}$ which starts at $\bm{\tau}$ in the direction of ${\bf{u}}$.  However, the exponential map is too computationally demanding to use in practice, so several alternative retractions have been proposed in the literature \cite{absil2012projection,absil2015low,gawlik2018high}.  A particularly convenient class of retractions is based on the concept of a \emph{retractor}, which is formally defined below for our problem setting.

\begin{framed}
\begin{definition}[Retractor]
A smooth mapping $\mathcal{R}: T\mathcal{M} \rightarrow \text{Gr}\left(1,\mathbb{R}^{d_\mu}\right)$ is said to be a \emph{retractor} if, for all $\bm{\tau} \in \mathcal{M}$, $\mathbb{R}^{d_\mu} = \mathcal{R}_{\bm{\tau}}(\bm{0}) \oplus T_{\bm{\tau}} \mathcal{M}$ where $\mathcal{R}_{\bm{\tau}}$ is the restriction of $\mathcal{R}$ to $\bm{\tau}$ (i.e. $\mathcal{R}_{\bm{\tau}}({\bf{u}}) := \mathcal{R}(\bm{\tau},{\bf{u}})$).
\label{def:retractor}
\end{definition}
\end{framed}

Note in the above definition that $\text{Gr}\left(1,\mathbb{R}^{d_\mu}\right)$ denotes the \textit{Grassmann manifold} of all lines through the origin in $\mathbb{R}^{d_\mu}$ \cite{boothby1986introduction}.  Thus, a retractor $\mathcal{R}$ maps a point $\bm{\tau} \in \mathcal{M}$ and tangent vector ${\bf{u}} \in T_{\bm{\tau}} \mathcal{M}$ to a line in Euclidean space.  Given $\bm{\tau} \in \mathcal{M}$ and ${\bf{u}} \in T_{\bm{\tau}} \mathcal{M}$, define the affine space $\mathcal{A}_{\bm{\tau}}({\bf{u}}) = \bm{\tau} + {\bf{u}} + \mathcal{R}_{\bm{\tau}}({\bf{u}})$.  Then we can find the closest point $\bm{\tau}^* \in \mathcal{M} \cap \mathcal{A}_{\bm{\tau}}({\bf{u}})$ to the point $\bm{\tau} + {\bf{u}}$.  Conveniently, this operation yields a retraction $R$ which satisfies:
\begin{align}
R_{\bm{\tau}}({\bf{u}}) = \argmin_{\bm{\tau}^* \in \mathcal{M} \cap \mathcal{A}_{\bm{\tau}}({\bf{u}})} \frac{1}{2} | \bm{\tau} + {\bf{u}} - \bm{\tau}^* |^2.
\end{align}
We call $R$ the retraction \textit{induced} by the retractor $\mathcal{R}$ \cite{absil2012projection}.  Given that retractors are much easier to construct (and compute) than retractions, we elect to induce retractors from retractions in this paper.  In particular, we will employ the retraction induced by the retractor:
\begin{align}
\bm{v}_{\bm{\tau}}(\bm{\eta}) = \left\{ \begin{array}{rl}
\text{span} \left( (\bm{\tau} + \bm{\eta}) - \bm{\tau}_\text{min} \right),& \mathcal{G}(\bm{\tau} + \bm{\eta}) \ge \mathcal{Q}_\text{allow}\\
\text{span} \left( \bm{\tau}_\text{max} - (\bm{\tau} + \bm{\eta}) \right),& \mathcal{G}(\bm{\tau} + \bm{\eta}) < \mathcal{Q}_\text{allow}
\end{array} \right.
\label{eqn:retractor}
\end{align}
where $\bm{\tau}_\text{max}$ and $\bm{\tau}_\text{min}$ encode upper and lower bounds on the allowable tolerance variable as discussed previously in Subsection \ref{section:hyperrectangleConstruct}.  Pseudocode for the retraction induced by the above retractor is provided in Algorithm 1.

\begin{algorithm}[t!]
\caption{Manifold retraction operation}
\begin{algorithmic}[1]
\Function{manifoldRetraction}{$\bm{\tau}, \bm{\eta}$}
\If{$\mathcal{G}(\bm{\tau} + \bm{\eta}) \ge \mathcal{Q}_\text{allow}$} \Comment{$\bm{\tau} + \bm{\eta}$ is above manifold}
\State $\bm{v} = (\bm{\tau} + \bm{\eta}) - \bm{\tau}_\text{min}$ \Comment{Retractor per Eq. \eqref{eqn:retractor}}
\Else \Comment{$\bm{\tau} + \bm{\eta}$ is below manifold}
\State $\bm{v} = \bm{\tau}_\text{max} - (\bm{\tau} + \bm{\eta})$ \Comment{Retractor per Eq. \eqref{eqn:retractor}}
\EndIf
\State $\bm{\tau}_{\bm{v}}(s) = s \bm{v} + (\bm{\tau} + \bm{\eta})$ \Comment{Line from tangent space along retractor to manifold}
\State Find $s^*$ such that $\mathcal{G}(\bm{\tau}_{\bm{v}}(s^*)) = \mathcal{Q}_\text{allow}$ \Comment{Determine manifold intersection}\\
\Return $\bm{\tau}_{\bm{v}}(s^*)$
\EndFunction
\end{algorithmic}
\end{algorithm}

With a suitable retraction in hand, we can now discuss the proposed manifold traversal algorithm for solving the optimal tolerance allocation problem.  The algorithm begin with an initial guess, $\bm{\tau}_0 \in \mathcal{M}$, which is obtained in a manner analogous to retraction.  We begin at the origin, or $\bm{\tau}_\text{min}$, and construct a ray from this point along the direction of $\nabla_{\bm{\tau}} \mathcal{F}(\bm{\tau}_\text{min})$. We then traverse this ray until the point of manifold intersection which is $\bm{\tau}_0$, the initial guess to the traversal algorithm. From this point, the remainder of the algorithm is iterative until convergence so for the sake of generality we employ the notation of $\bm{\tau}_i$.

At the point $\bm{\tau}_i$, we construct a basis for the tangent space $T_{\bm{\tau}_i} \mathcal{M}$, and using this basis, we compute the orthogonal projection matrix ${\bf P}$ onto the tangent space. We then determine if $\bm{\tau}_i$ lies on the one of the walls of the tolerance bounding box $\mathcal{T}_{\text{bounding}}$ and, if so, check if $\nabla_{\bm{\tau}} \mathcal{F}(\bm{\tau}_i)$ has increasing normal derivative out of that wall. If both of these conditions are met, we zero out the corresponding row of the projection matrix ${\bf P}$, effectively projecting the tangent space onto the boundary of $\mathcal{T}_{\text{bounding}}$. This ensures that our algorithm does not prematurely terminate on the walls of the tolerance bounding box.  Pseudocode for this procedure is provided in Algorithm 2.

\begin{algorithm}[t]
\caption{Build projection matrix}
\begin{algorithmic}[1]
\Function{buildProjection}{$\bm{\tau},\bm{\tau}_\text{min},\bm{\tau}_\text{max}$} 
\State{$\text{CG} = 1$} 
\State Compute orthonormal basis $\left\{ {\bf t}_i \right\}_{i=1}^{d_{\bm{\mu}} - 1}$ of $\text{span}\left\{ \nabla_{\bm{\tau}} \mathcal{G}(\bm{\tau}) \right\}^{\perp}$ \Comment{Compute tangent space basis}
\State ${\bf T} = \left[ {\bf t}_1 | {\bf t}_2 | \cdots | {\bf t}_{d_{\bm{\mu}}-1}\right]$ \Comment{Create tangent space basis matrix}
\State ${\bf P} = {\bf T} {\bf T}^T$ \Comment{Initialize projection matrix}
\For{$k=1,2,\ldots,d_{\bm{\mu}}$}
\If{$\left(\bm{\tau}\right)_k == \left(\bm{\tau}_\text{min}\right)_k$ {\bf or} $\left(\bm{\tau}\right)_k == \left(\bm{\tau}_\text{max}\right)_k$} \Comment{Check if $\bm{\tau}$ is on wall}
\If{$ \nabla_{\bm{\tau}} \mathcal{F}(\bm{\tau}) \cdot {\bf n}_k \ge 0$} \Comment{${\bf n}_k$ is unit outward normal to wall $k$}
\State{$\text{CG} = 0$} \Comment{New wall intersection}
\For{$m=1,2,\ldots,d_{\bm{\mu}}$}
\State ${\bf P}_{km} = 0$ \Comment{Zero out $k^{th}$ row of projection matrix}
\EndFor
\EndIf
\EndIf
\EndFor\\

\Return{$\left[{\bf P},\text{CG}\right]$}
\EndFunction
\end{algorithmic}
\end{algorithm}

\begin{algorithm}[t]
\caption{Bound-constrained manifold gradient ascent}
\begin{algorithmic}[1]
\Function{manifoldGradientAscent}{$\bm{\tau}_0, \bm{\tau}_\text{min}, \bm{\tau}_\text{max}$}
\State{$i=0$}
\While{$i < N$}
\State{${\bf P} = \textproc{buildProjection}(\bm{\tau}_i,\bm{\tau}_\text{min},\bm{\tau}_\text{max})$} \Comment{Get projection matrix at $\bm{\tau}_i$}
\State{$\bm{v} = {\bf P} \left( \nabla_{\bm{\tau}} \mathcal{F}(\bm{\tau}_i) \right)$} \Comment{Compute projection of $\nabla_{\bm{\tau}} \mathcal{F}(\bm{\tau}_i)$}
\State Find $\alpha^*$ = $\displaystyle \argmax_{\substack{\alpha \in \mathbb{R} \text{ such that}\\ \bm{\tau}_i + \alpha \bm{v} \in \mathcal{T}_{\text{bounding}}}} \mathcal{F}\left(R_{\bm{\tau}_i}\left(\alpha \bm{v}\right)\right)$ \Comment{Line search for optimal step in ascent direction}
\State{$\bm{\tau}_{i+1} = R_{\bm{\tau}_i}\left(\alpha^* \bm{v}\right)$} \Comment{Compute $\bm{\tau}_{i+1}$}
\State{$i=i+1$} \Comment{Increment counter}
\EndWhile
\EndFunction
\end{algorithmic}
\end{algorithm}

\begin{algorithm}[t!]
\caption{Bound-constrained manifold nonlinear conjugate gradients}
\begin{algorithmic}[1]
\Function{manifoldConjugateGradients}{$\bm{\tau}_0, \bm{\tau}_\text{min}, \bm{\tau}_\text{max}$}
\State{$i=0$}
\While{$i < N$}
\State{$\left[{\bf P},\text{CG}\right] = \textproc{buildProjection}(\bm{\tau}_i,\bm{\tau}_\text{min},\bm{\tau}_\text{max})$} \Comment{Get ${\bf P}$ and check for new wall intersection}
\State{$\bm{g}^{(i)} = {\bf P} \left( \nabla_{\bm{\tau}} \mathcal{F}(\bm{\tau}_i) \right)$} \Comment{Compute projection of $\nabla_{\bm{\tau}} \mathcal{F}(\bm{\tau}_i)$}
\If{$\text{CG}$ {\bf and} $i>0$}
\State{$\beta_i = \frac{ \left( \bm{g}^{(i)} \right)^T \left( \bm{g}^{(i)} \right)}{\left( \bm{g}^{(i-1)} \right)^T \left( \bm{g}^{(i-1)} \right)}$} \Comment{Fletcher-Reeves search update weighting}
\State $\bm{v}^{(i)} = \bm{g}^{(i)} + \beta_{i}\mathcal{T}_{\alpha_{i-1}\bm{v}^{(i-1)}}(\bm{v}^{(i-1)})$ \Comment{Add vector transport to search direction}
\Else
\State $\bm{v}^{(i)} = \bm{g}^{(i)}$ \Comment{Perform gradient ascent}
\EndIf
\State Find $\alpha_i$ = $\displaystyle \argmax_{\substack{\alpha \in \mathbb{R} \text{ such that}\\ \bm{\tau}_i + \alpha \bm{v}^{(i)} \in \mathcal{T}_{\text{bounding}}}} \mathcal{F}\left(R_{\bm{\tau}_i}\left(\alpha \bm{v}^{(i)}\right)\right)$ \Comment{Line search for optimal step in ascent direction}
\State{$\bm{\tau}_{i+1} = R_{\bm{\tau}_i}\left(\alpha_i \bm{v}^{(i)}\right)$} \Comment{Compute $\bm{\tau}_{i+1}$}
\State{$i=i+1$} \Comment{Increment counter}
\EndWhile
\EndFunction
\end{algorithmic}
\end{algorithm}

Once the projection matrix ${\bf P}$ has been constructed, either the manifold gradient ascent method or the manifold conjugate gradient method may be employed to iterate the solution.  Pseudocode for the manifold gradient ascent method is provided in Algorithm 3, while pseudocode for the manifold conjugate gradient method is provided in Algorithm 4.  It should be noted that the algorithms largely follow the manifold gradient ascent and conjugate gradient algorithms provided in \cite{absil2009optimization}, though our algorithms also constrain the obtained solution to lie within the tolerance bounding box $\mathcal{T}_{\text{bounding}}$.  The manifold gradient ascent method chooses the ascent direction by projecting $\nabla_{\bm{\tau}} \mathcal{F}(\bm{\tau}_i)$ onto the tangent basis while the manifold conjugate gradient method uses the same ascent direction but additionally accounts for previous iterate search directions.   Note in our algorithm we have employed the somewhat standard Fletcher-Reeves weighting scheme for $\beta_i$, the search direction update \cite{fletcher1964function}. However there exists possible alternative choices of this parameter, such as Polak-Ribi\`ere which weights the search update by the change in magnitude between the iterates \cite{polak1969note}. A line search is then performed over this search direction, between the walls of $\mathcal{T}_{\text{bounding}}$, and the maximal value is set to $\bm{\tau}_{i+1}$. This line search process utilizes the retraction algorithm provided in Algorithm 1 since every function evaluation on the manifold is equivalent to an evaluation of a retracted tangent space entity.  For the examples shown in this paper, the line search process is carried out using Brent's method \cite{brent2013algorithms}.  Note that the first iteration of the manifold conjugate gradient method is simply manifold gradient ascent, and every time the algorithm reaches the boundary of $\mathcal{T}_{\text{bounding}}$, the manifold conjugate gradient method is restarted.  It should also be noted that the manifold conjugate gradient method exploits the notion of \textit{vector transport} of $\bm{\xi} \in T_{\bm{\tau}} \mathcal{M}$ along ${\bf{u}}  \in T_{\bm{\tau}} \mathcal{M}$ from $\bm{\tau} \in \mathcal{M}$ through the differentiated retraction operator:
\begin{align}
\mathcal{T}_{{\bf{u}}}(\bm{\xi}) := D R_{\bm{\tau}}({\bf{u}})[\bm{\xi}]
\end{align}
where $D$ denotes the Fr\'{e}chet derivative operator.  For more details on vector transport, the reader is referred to Chapter 8 of \cite{absil2009optimization}.  The manifold gradient ascent method is technically first-order, while the manifold conjugate gradient method may be regarded as a blend between the first-order gradient ascent method and a second-order Newton method.  The manifold gradient ascent method suffers from slow convergence in the presence of large disparity between Hessian eigenvalues, while the manifold conjugate gradient method is typically more robust \cite{absil2009optimization}.  It should finally be noted that Newton methods \cite{smith1994optimization}, quasi-Newton methods \cite{huang2015broyden,huper2004newton}, and trust-region methods \cite{absil2007trust} have also been developed for manifold optimization.

\section{Numerical Tests}
\label{sec:NumTests}

In this section, we apply the aforementioned methodology for optimal tolerance allocation to the setting of linear elasticity. We consider a suite of problems including: (i) a plate with a hole described by two design parameters, (ii) a plate with a hole described by six design parameters, and (iii) an L-Bracket described by seventeen design parameters. These problems are chosen to demonstrate the robustness and effectiveness of the tolerance allocation algorithm with respect to dimensionality over complex and intricate geometric configurations. 

In each of our numerical tests, we begin by constructing the appropriately-sized sampling domain as discussed in Subsection \ref{section:hyperrectangleConstruct}. From here, we are capable of building surrogate models over a variety of polynomial degrees and ranks using the separated representation methodology presented in Subsection \ref{section:SRConstruct}. These models are constructed from a database of realizations from uniformly-distributed Monte Carlo samples corresponding to geometries which reside in the predetermined sampling domain. Both the average
\begin{equation}
\|{\bf e}_{r,p}\|_\text{M} := \frac{1}{N_\text{c}} \sum_{i=1}^{N_\text{c}} \left|\frac{ \mathcal{Q}(\bm{\mu}_i) - \tilde{\mathcal{Q}}_{r,p}(\bm{\mu}_i) }{ \mathcal{Q}(\bm{\mu}_i)}\right|
\label{eqn:mean_norm}
\end{equation}
and maximum
\begin{equation}
\|{\bf e}_{r,p}\|_\infty := \max_{1 \le i \le N_c} \left|\frac{ \mathcal{Q}(\bm{\mu}_i) - \tilde{\mathcal{Q}}_{r,p}(\bm{\mu}_i) }{ \mathcal{Q}(\bm{\mu}_i)}\right|
\label{eqn:max_norm}
\end{equation}
relative errors in the surrogate models, as a function of polynomial degree $p$ and separation rank $r$, are considered in this section. Here, $N_c$ is the number of compared samples, none of which are used in the construction of $\tilde{\mathcal{Q}}_{r,p}(\bm{\mu})$. The average relative error provides a notion of surrogate model convergence while the maximum error is the pointwise quantity we wish to accurately capture, since the worst member in the set of designs indicates compliance to the system performance. Tables containing these errors will be presented and leveraged in our choice of surrogate model construction. Moreover, our proposed methodology allows the user to effectively tune the surrogate model to be within their desired fidelities through this approach. 

Throughout these numerical tests, we allocate tolerances while considering the effect of design parameter variations on the maximum von Mises stress at specified areas, effectively characterizing part failure. For the plate with a hole described by two design parameters, we also consider the total strain energy of the design configuration, providing a notion of overall geometric stiffness. To assess our tolerance allocation algorithm's effectiveness, we consider the three following measures throughout our numerical results. First, we consider
\begin{equation}
\bm{\epsilon}_{r,p} := \hat{\bm{\tau}} - \bm{\tau}_{r,p}
\end{equation}
which is the error between the obtained tolerance of a low-fidelity surrogate model and the \emph{true optimal} value, which comes from either a dense sampling or a high-fidelity surrogate model. In particular, we examine $\| \bm{\epsilon}_{r,p} \|_\infty$.  When the manifold is convex, we expect a single global maximum, so provided the low-fidelity surrogate model $\tilde{\mathcal{Q}}_{r,p}(\bm{\mu})$ converges to the system performance $\mathcal{Q}(\bm{\mu})$ in a pointwise manner, we expect convergence with respect to this measure.  In the non-convex setting, multiple global maxima may exist, so this first measure of error may not be appropriate.  Second, we consider

\begin{equation}
\varphi_A\left(  \bm{\tau}_{r,p} \right) := \frac{\left| \mathcal{F}_A\left(  \hat{\bm{\tau}} \right) - \mathcal{F}_A\left(  \bm{\tau}_{r,p} \right) \right|}{\mathcal{F}_A\left(  \hat{\bm{\tau}} \right)}
\end{equation}

which is the relative error in the objective functional with respect to the true optimal value. If $A = 1$, this corresponds to the 1-norm, Eq. \eqref{eqn:tolNorm_1}; if $A = \bm{\mu}$, this corresponds to the $\bm{\mu}$-norm, Eq. \eqref{eqn:tolNorm_mu}; and if $A = -1$, this corresponds to the $-1$-norm, Eq. \eqref{eqn:tolNorm_-1}.  Convergence can be attained with respect to this measure even in the non-convex setting where multiple global maxima may exist since these global maxima return the same value for the objective functional.  However, our algorithm may return local maxima rather than global maxima in the non-convex setting.  We lastly consider the following measure

\begin{equation}
\gamma_A\left(  \bm{\tau}_{r,p} \right) := \frac{\left| \mathcal{Q}_\text{A,allow} - \mathcal{G}_{A}(\bm{\tau}_{r,p}) \right|}{\mathcal{Q}_\text{A,allow}}
\end{equation}

which is the relative error in the constraint functional with respect to the performance constraint. If $A = SE$, this corresponds to the strain energy measure, and if $A = M$, this corresponds to the maximum von Mises stress measure, as defined later in \eqref{eqn:max_stress}. This measure assesses the convergence of the surrogate model to the the true model in the optimization routine. We do expect convergence with respect to this measure, since it is our manifold definition. Before proceeding with the presentation of numerical results, we briefly discuss linear elasticity theory, in its parametric form, which is employed throughout the remainder of this section.

\subsection{Linear Elasticity}


In linear elasticity, the components of the infinitesimal strain tensor are given by the symmetric part of the gradient of the displacement field:
\begin{align}
\bm{\varepsilon}(\vec{u}) = \frac{1}{2} \left(\left(\nabla \vec{u}\right) + \left(\nabla \vec{u}\right)^T \right),
\label{eqn:linGL}
\end{align}
where $\vec{u}$ is displacement field. With an appropriate material model ${\bf D}$, we can relate the internal stresses to the strain via:
\begin{align}
\bm{\sigma} = {\bf D} : \bm{\varepsilon}.
\end{align}
The strong form of the parametric PDEs governing linear elasticity are given in terms of internal stresses by:

For every $\bm{\mu} \in \mathcal{D}_{\hat{\bm{\mu}}}$, find $\vec{u}({\bf x}) \in C^2(\Omega_{\bm{\mu}})$ such that:
\begin{align}
(S) \left\{ \begin{array}{rll}
\nabla \cdot \bm{\sigma} &= \vec{f},& {\bf x} \in \Omega_{\bm{\mu}}\\
\vec{u} &= \vec{g} ,& {\bf x} \in \Gamma_{\bm{\mu},D}\\
\bm{\sigma} \cdot \vec{n}_{\bm{\mu}} &= \vec{h} ,& {\bf x} \in \Gamma_{\bm{\mu},N},
\end{array} \right.
\end{align}

where $\vec{f}$ is the external loading, $\vec{g}$ is the Dirichlet boundary condition over the parametric Dirichlet boundary $\Gamma_{\bm{\mu},D}$, and $\vec{h}$ is the Neumann boundary condition over the parametric Neumann boundary $\Gamma_{\bm{\mu},N}$. Note that here, $\vec{n}_{\bm{\mu}}$ is the the outward normal director of $\Gamma_{\bm{\mu},N}$. Note that our methodology is capable of additionally parameterizing external forcing, boundary conditions, and material constants; however we restrict ourselves here simply to geometric variations.

We seek the weak solution to this problem by invoking the \textit{principle of virtual work}. In particular, we consider the strain field $\bm{\varepsilon}(\vec{u})$ which minimizes the potential energy configuration of the system against a space of test functions. The \emph{trial} and \emph{test} spaces for the weak linear elasticity problem are defined in their parametric form as:
\[
\mathcal{S}(\bm{\mu}) := \left\{ \vec{u} : \Omega_{\bm{\mu}} \rightarrow \R^{d_s} \ \big| \ \vec{u} \in \left(\mathcal{H}^1(\Omega_{\bm{\mu}})\right)^{d_s} \text{ and} \left. \vec{u} \right|_{\Gamma_{\bm{\mu},D}} = \vec{g} \right\}
\]
and
\[
\mathcal{V}(\bm{\mu}) := \left\{ \vec{w} : \Omega_{\bm{\mu}} \rightarrow \R^{d_s} \ \big| \ \vec{w} \in \left(\mathcal{H}^1(\Omega_{\bm{\mu}})\right)^{d_s} \text{ and} \left. \vec{w} \right|_{\Gamma_{\bm{\mu},D}} = 0 \right\}
\]
respectively. Then, the variational form of this parametric PDE system is given by the $L^2$ inner product with an arbitrary test function $\vec{w} \in \mathcal{V}(\bm{\mu})$ followed by an integration by parts. This allows us to express the system in variational form as:

For $\bm{\mu} \in \mathcal{D}_{\hat{\bm{\mu}}}(\bm{\tau})$, find $\vec{u} \in \mathcal{S}(\bm{\mu})$ such that:
\[
a\left(\vec{w},\vec{u};\bm{\mu} \right) = \ell(\vec{w};\bm{\mu}) \hspace{20pt} \forall \ \vec{w} \in \mathcal{V}(\bm{\mu})
\]
where:
\[
a(\vec{w},\vec{u};\bm{\mu}) = \int_{\Omega_{\bm{\mu}}} \bm{\varepsilon}(\vec{w}) : {\bf D} : \bm{\varepsilon}(\vec{u}) \ d\Omega_{\bm{\mu}} \hspace{20pt} \forall \ \vec{w} \in \mathcal{V}(\bm{\mu})
\]
and
\[
\ell(\vec{w};\bm{\mu}) = \int_{\Omega_{\bm{\mu}}} \vec{w} \cdot \vec{f} \ d\Omega_{\bm{\mu}} + \int_{\Gamma_{N_{\bm{\mu}}}} \vec{w} \cdot \vec{h} \ d \Gamma_{\bm{\mu},N}.\hspace{20pt} \forall \ \vec{w} \in \mathcal{V}(\bm{\mu})
\]

For isogeometric implementation, we must convert the above weak formulations into a system of algebraic equations. This is accomplished through the application of Galerkin's method where we work in the finite-dimensional subspaces $\mathcal{S}^h(\bm{\mu}) \subset \mathcal{S}(\bm{\mu})$ and $\mathcal{V}^h(\bm{\mu}) \subset \mathcal{V}(\bm{\mu})$. These spaces are defined using the NURBS basis scaled by vector-valued control variables. In particular, the trial and test spaces are defined as:
\[
\mathcal{S}^h(\bm{\mu}) := \left\{ \vec{u}^h \in \mathcal{S}(\bm{\mu}) \colon  \vec{u}^h({\bf x}) = \sum_i \vec{d}_i N_i ({\bf x}) \right\};
\]
\[
\mathcal{V}^h(\bm{\mu}) := \left\{ \vec{w}^h \in \mathcal{S}(\bm{\mu}) \colon  \vec{w}^h({\bf x}) = \sum_i \vec{c}_i N_i ({\bf x}) \right\},
\]
where we note that the splines in this space must be at least $C^0$-continuous. To obtain the Galerkin form of the parametric PDE system, we analogously perform the $L^2$ inner product between members of these finite-dimensional test and trial spaces:

For $\bm{\mu} \in \mathcal{D}_{\hat{\bm{\mu}}}(\bm{\tau})$, find $\vec{u}^h \in \mathcal{S}^h(\bm{\mu})$ such that:
\[
a(\vec{w}^h,\vec{u}^h;\bm{\mu}) = \ell(\vec{w}^h;\bm{\mu}) \hspace{20pt} \forall \ \vec{w}^h \in \mathcal{V}^h(\bm{\mu}).
\]
This amounts to solving the matrix system Eq. \eqref{eqn:parametricPDE} for given $\bm{\mu} \in \mathcal{D}_{\hat{\bm{\mu}}}(\bm{\tau})$ where:
\begin{align}
\left[ \textbf{K}(\bm{\mu}) \right]_{PQ} = a(N_i \hat{e}_A,N_j \hat{e}_B;\bm{\mu}) \hspace{20pt} \text{and} \hspace{20pt} \left[ \textbf{F}(\bm{\mu}) \right]_{P} = \ell(N_i \hat{e}_A;\bm{\mu}),
\end{align}
where $P,Q$ are associated with an indexing scheme returning a global row number for each degree of freedom $A$ and basis function $i$. See \cite[Chapter 2]{HughesFEM} for more details.

From the solution vector $\vec{u}$, we can construct the surrogate models to the aforementioned system performances we consider throughout the numerical tests. In particular, we consider:
\begin{align}
\text{Maximum von Mises Stress:} \hspace{5pt} \mathcal{Q}_\text{M} (\bm{\mu}) = \max_{\bm{\xi}\in \mathcal{P}} \sigma_v({\bf x(\bm{\xi})},\bm{\mu}) \hspace{30pt} \text{Strain Energy:} \hspace{5pt} \mathcal{Q}_\text{SE} (\bm{\mu}) = \int_{\Omega_{\bm{\mu}}} \bm{\sigma} : \bm{\varepsilon} \ d \Omega_{\bm{\mu}}
\label{eqn:strain_energy}
\end{align}
for a set $\mathcal{P} \subset \hat{\Omega}$ specified \textit{a priori}. Moreover, the corresponding performance measures are given by:
\begin{align}
\text{Maximum von Mises Stress:} \hspace{5pt} \mathcal{G}_\text{M}(\bm{\tau}) = \max_{\bm{\mu} \in \mathcal{D}_{\hat{\bm{\mu}}}(\bm{\tau})} \mathcal{Q}_{M}(\bm{\mu}) \hspace{30pt} \text{Strain Energy:} \hspace{5pt} \mathcal{G}_\text{SE}(\bm{\tau}) = \max_{\bm{\mu} \in \mathcal{D}_{\hat{\bm{\mu}}}(\bm{\tau})} \mathcal{Q}_{SE}(\bm{\mu}) 
\label{eqn:max_stress}
\end{align}

Finally, analogous to Eq. \eqref{eqn:OptProb}, we have in this setting the following optimization problem:

Given $\hat{\bm{\mu}}$, find $\hat{\bm{\tau}}$ such that
\[
\hat{\bm{\tau}} = \argmax_{\bm{\tau} \in \mathcal{T}_\text{allow}} \mathcal{F}(\bm{\tau}) \hspace{15pt} \text{where} \hspace{15pt}
\mathcal{T}_\text{allow} := \left\{ \bm{\tau} \in \R^{d_{\bm{\mu}}} : \mathcal{G}_\text{A}(\bm{\tau}) \le \mathcal{Q}_\text{allow} \right\}
\]
where $A = M$ or $SE$ in the case of maximum von Mises stress and strain energy, respectively.


\subsection{Plate with a Hole with Two Design Parameters}
\label{sec:2dplate}

\begin{figure}[t!]
	\centering
	\begin{subfigure}[c]{.45\textwidth}
		\centering
		\includegraphics[scale=1.19]{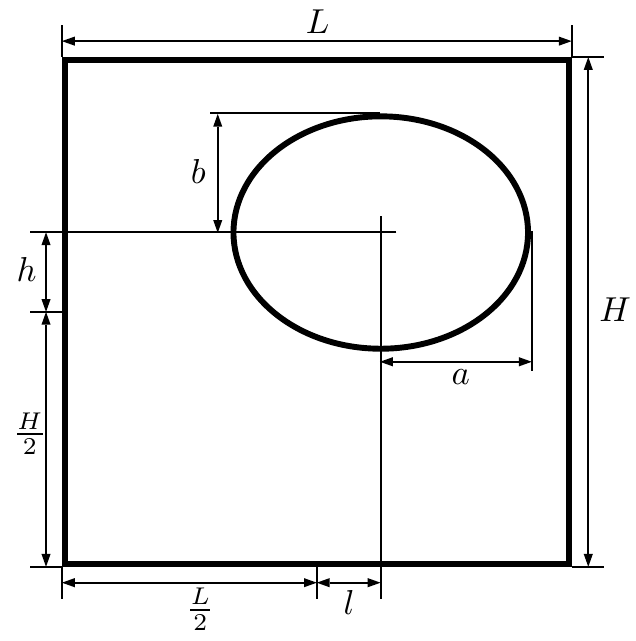}
	\end{subfigure}
	\begin{subfigure}[c]{.45\textwidth}
		\centering
		\includegraphics[scale=1.20]{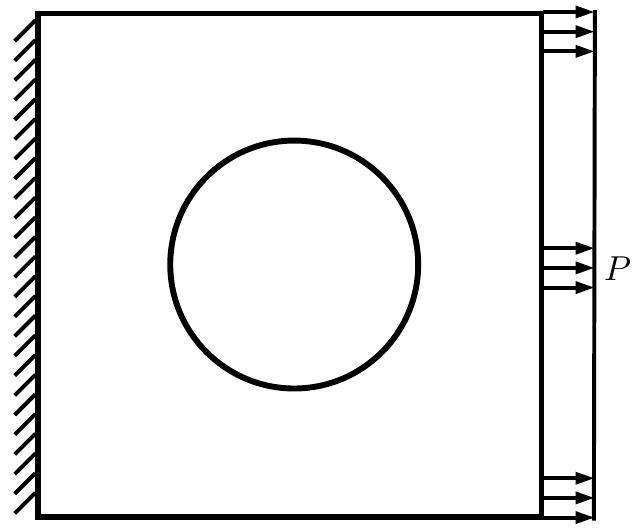}
	\end{subfigure}\\
	\begin{subfigure}[c]{.45\textwidth}
		\centering
		\includegraphics[scale=0.75]{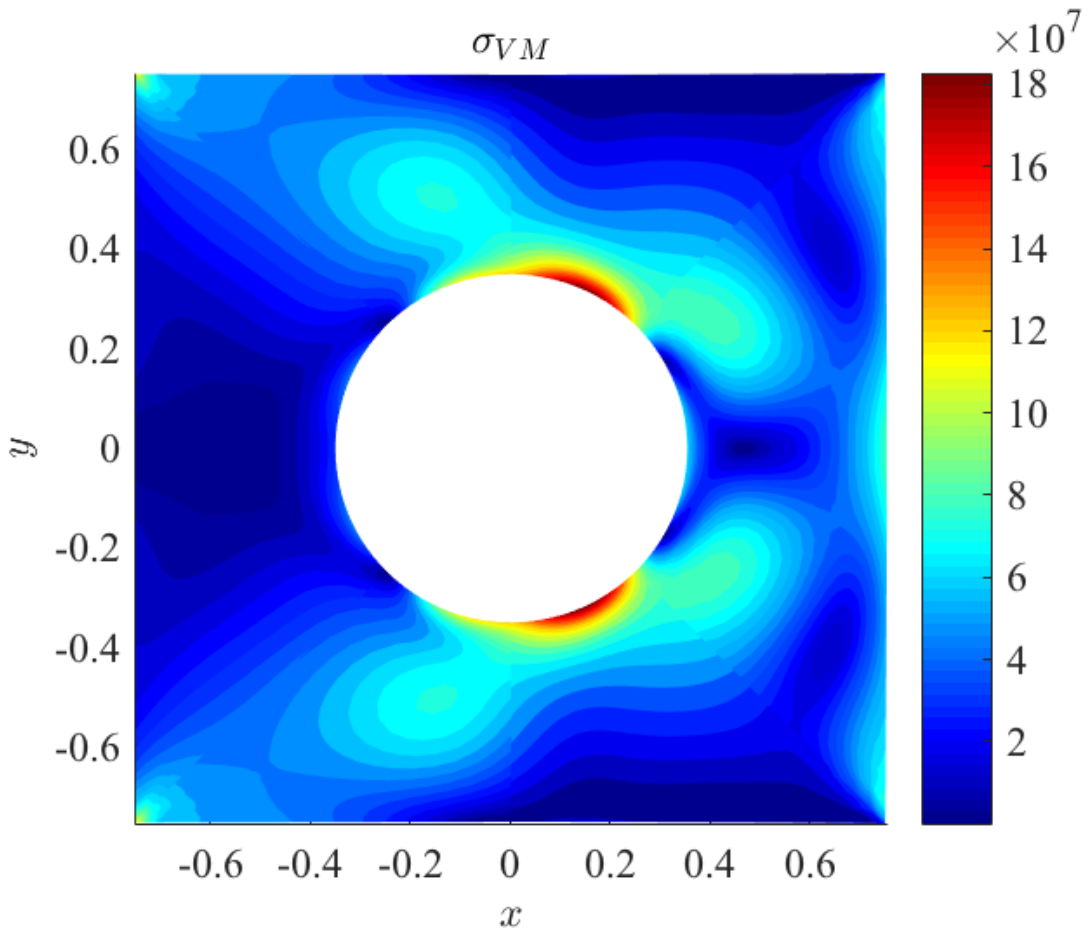}
	\end{subfigure}
	\caption{(upper left) Plate with a hole geometric configuration. (upper right) The loading and boundary conditions associated with the plate with a hole problem. A uniform loading of $P=30 \times 10^6$ is applied to the right and the left side of the plate has zero displacement boundary conditions. (bottom) von Mises stress distribution for the plate with a hole nominal configuration.}
	\label{fig:PlateHole_config}
\end{figure}

In this subsection, we consider the structural deformation of a plate with an elliptic hole whose major axes are aligned with the sides of the plate.  The geometric dimensions of the plate are illustrated in the upper left of Fig. \ref{fig:PlateHole_config}.  The length and height of the plate are taken to be $L = H = 1.5$, the radii of the hole are taken to be $a = b = 0.35$, and the displacement of the center of the hole from the center of the plate is expressed in terms of a dimensionless design variable $\bm{\mu} \in \mathbb{R}^2$ via:
\begin{align*}
l = \mu_1 \left( \frac{L}{2} - a \right) \hspace{10pt} \text{ and } \hspace{10pt} h = \mu_2 \left( \frac{H}{2} - b \right).
\end{align*}
The nominal configuration is with the hole centered in the plate, corresponding to:
\begin{equation*}
\hat{\bm{\mu}} = \left( \begin{array}{c}
\hat{\mu}_{1}\\
\hat{\mu}_{2} \end{array} \right) = \left( \begin{array}{c}
0\\
0 \end{array} \right).
\end{equation*}
The plate is assumed to be made of an isotropic material with Young's modulus $E = 200 \times 10^9$ and Poisson ratio $\nu = 0.3$, and the plate is assumed to be in a plane stress state. The loading and boundary conditions are depicted in the upper right of Fig. \ref{fig:PlateHole_config}.  In particular, a uniform loading of $P=30 \times 10^6$ is applied to the right side of the plate, zero displacement boundary conditions are applied at the left side of the plate, and zero traction boundary conditions are applied at the top and bottom sides of the plate as well along the hole. To obtain the structural deformation of the plate under the applied loading and boundary conditions, we discretize the plate with a 512-element, multi-patch isogeometric analysis model parametrized with quadratic NURBS functions. This choice of analysis model accurately represents the circular hole and additionally provides a natural parametric modeling framework for obtaining quantities of interest as a function of the design variable.  The von Mises stress distribution for the nominal configuration is displayed in the bottom of Fig. \ref{fig:PlateHole_config}.  Herein, we study the impact of variations in hole placement on (i) the maximum von Mises stress occurring at either the top or bottom of the hole and (ii) the strain energy of the plate configuration.

This problem is thoroughly investigated since, due to the low-dimensional nature of the design space, an ``exact'' optimal tolerance is obtainable through a dense sampling of the design space. Through this approach, we are capable of comparing the accuracy of the tolerance obtained through our tolerance allocation algorithm to this optimum as a function of polynomial degree and rank of the chosen surrogate. 

\begin{table}[t!] 
	 \centering 
	\caption{Plate with a hole with two design parameters: The strain energy and maximum von Mises stress surrogate modeling errors constructed from $N = 100$ samples and $N_c = 500$.}
	 \label{table:PlateHole2Dhk_SE} 	 	 
	 \begin{tabular}{cc|cccc|cccc}
	 & & \multicolumn{4}{c|}{Strain Energy} & \multicolumn{4}{c}{Maximum von Mises Stress} \\
& & $r=1$ & $r=2$ & $r=3$ & $r=4$ & $r=1$ & $r=2$ & $r=3$ & $r=4$ \\ 
\hhline{==========}
\multirow{7}{*}{$\| {\bf e }_{r,p} \|_\infty$} & degree 0 & 1.386e-1 & & & & 2.030e-1 & & & \\ 
& degree 1  & 6.861e-2  & 6.793e-2  &  & & 1.327e-2  & 8.633e-3  &  &  \\ 
& degree 2  & 1.309e-3  & 5.232e-4  & 5.326e-4  & & 8.071e-3  & 1.910e-3  & 2.236e-3  & \\ 
& degree 3  & 1.007e-3  & 8.625e-5  & 8.923e-5  & 8.965e-5 & 7.537e-3  & 1.602e-4  & 4.724e-4  & 4.969e-4  \\ 
& degree 4  &   & 1.206e-5  & 6.635e-6  & 6.841e-6 &  & 1.432e-4  & 2.918e-5  & 2.198e-4 \\ 
& degree 5  &   &  & 4.811e-6  & 1.438e-6 &  &  & 2.064e-5  & 2.014e-5 \\ 
& degree 6  &   &  &   & 2.288e-6 &  &  &  & 4.958e-6  \\ 
\hhline{==========}
\multirow{7}{*}{$\| {\bf e }_{r,p} \|_M$} & degree 0 & 5.194e-2 & & & & 6.777e-2 & & & \\ 
& degree 1  & 2.618e-2  & 2.623e-2  & & & 2.410e-3  & 2.159e-3  &   & \\ 
& degree 2  & 2.870e-4  & 1.478e-4  & 1.474e-4  & & 1.517e-3  & 4.241e-4  & 4.225e-4  & \\ 
& degree 3  & 2.617e-4  & 1.896e-5  & 1.893e-5  & 1.901e-5 & 1.542e-3  & 3.705e-5  & 3.946e-5  & 4.022e-5\\ 
& degree 4  &  & 2.131e-6  & 1.372e-6  & 1.375e-6 & & 1.532e-5  & 4.088e-6  & 6.448e-6 \\ 
& degree 5  &  &   & 2.401e-7  & 1.516e-7 &   &   & 1.654e-6  & 1.212e-6\\ 
& degree 6  & &   &   & 9.288e-8 &   &   &   & 3.343e-7\\ 
\hhline{==========}
	 	 \end{tabular}
	 \end{table}
	 
\begin{table}[!t] 
\centering
\caption{Plate with a hole with two design parameters: Optimal tolerance values obtained using a dense sampling of the sampling domain. These values are treated as the ``exact'' optima. Bold numbers indicate values lying on the boundary of the tolerance bounding box.}
\label{table:2dExactTol}
\begin{tabular}{c|ccc|ccc}
& \multicolumn{3}{c|}{Strain Energy} & \multicolumn{3}{c}{Maximum von Mises Stress} \\
& $\mathcal{F}_1(\bm{\tau})$ & $\mathcal{F}_{\bm{\mu}}(\bm{\tau})$ & $\mathcal{F}_{-1}(\bm{\tau})$ & $\mathcal{F}_1(\bm{\tau})$ & $\mathcal{F}_{\bm{\mu}}(\bm{\tau})$ & $\mathcal{F}_{-1}(\bm{\tau})$\\
\hline
$\hat{\tau}_1$ & 0.114 & {\bf 0.153} & 0.100 & {\bf 0.263} & {\bf 0.263} & 0.100 \\
$\hat{\tau}_2$ & 0.081 & {\bf 0.000} & 0.093 & {\bf 0.000} & {\bf 0.000} & 0.061 \\
\end{tabular}
\end{table}

The process begins by sizing the sampling domain in accordance with the techniques described in Subsection \ref{section:hyperrectangleConstruct}. The sizing process is accomplished using the performance constraints of $\mathcal{Q}_\text{M,allow} = 210 \times 10^6$, corresponding to approximately a $10\%$ deviation from the nominal maximum von Mises stress, and $\mathcal{Q}_\text{SE,allow} = 7.6 \times 10^6$, which corresponds to approximately a $10\%$ deviation from the nominal strain energy. Since we consider two separate quantities of interest, we must size two sampling domains according to this methodology. Given the nominal geometric configuration, this corresponds to the domains defined by:
\begin{equation*}
\begin{array}{ccc}
\bm{\tau}_\text{max,M} = \left( \begin{array}{c}
\tau_{\text{max,M},1}\\
\tau_{\text{max,M},2} \end{array} \right) = \left( \begin{array}{l}
0.263\\
0.098 \end{array} \right) \hspace{10pt} \text{ and } \hspace{10pt} & \bm{\tau}_\text{max,SE} = \left( \begin{array}{c}
\tau_{\text{max,SE},1}\\
\tau_{\text{max,SE},2} \end{array} \right) = \left( \begin{array}{l}
0.153\\
0.156\end{array} \right).
\end{array}
\end{equation*}
We also set the the minimum tolerances equal to zero, i.e. $\bm{\tau}_\text{min} = {\bf{0}}$.

To proceed with a demonstration of our methodology, we construct a surrogate model to the aforementioned quantities of interest. This is accomplished by constructing separated representations of the system performances presented in Eq. \eqref{eqn:strain_energy}. Determining the appropriate polynomial degree and rank amounts to performing a survey of these parameters and selecting the model which suits the desired fidelity. A set of $N = 100$ Monte Carlo samples are used in the construction of these surrogate models and their relative accuracy is computed using an additional $N_c = 500$ samples not used in the model construction.  It should be noted that we construct two separate surrogate models for the von Mises stress at the top and bottom of the hole since, while the von Mises stress at either of these points is smooth respect to design variable variations, the maximum von Mises stress among these two locations is not smooth.  We then take the maximum of the two values obtained from these surrogate models whenever we compute the maximum von Mises stress.  Table \ref{table:PlateHole2Dhk_SE} portrays the results of our survey and moreover exhibits the convergence of the separated representations with respect to the polynomial degree and rank of the expansion. Due to the least-squares nature of the separated representations, we only expect convergence in an $L^2$-sense. However, the smoothness associated with these response surfaces additionally provides convergence in Eq. \eqref{eqn:mean_norm} and Eq. \eqref{eqn:max_norm}. 

\begin{figure}[t!]
\centering
	\begin{subfigure}{\textwidth}
		\includegraphics{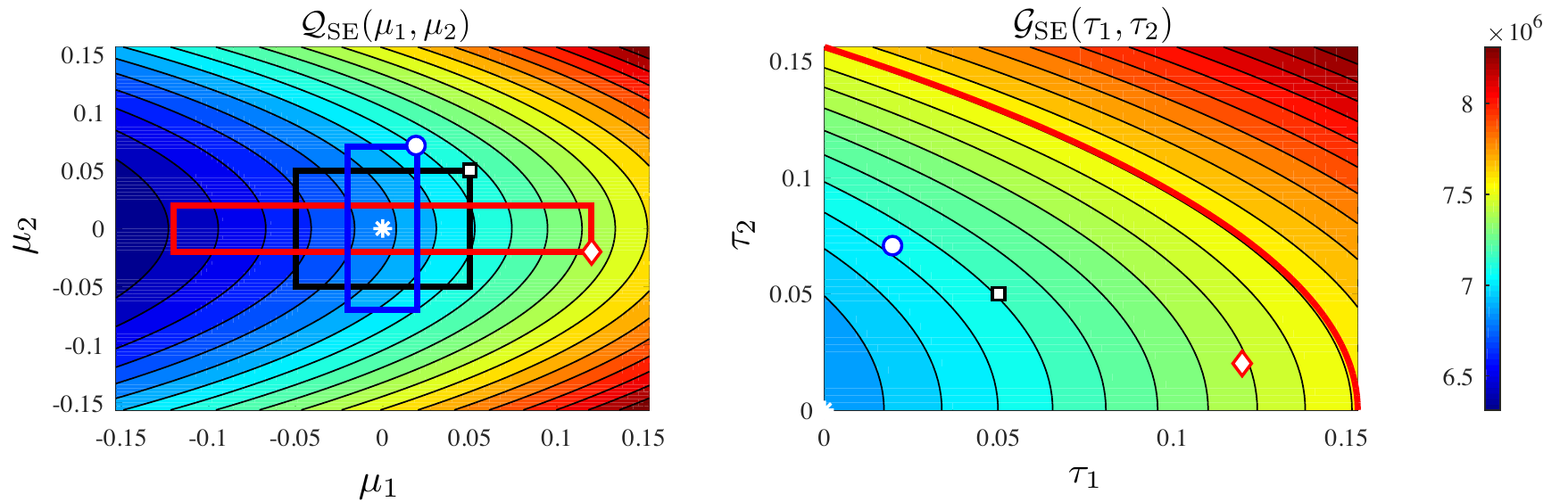}
		\caption{(left) A contour plot of the strain energy and (right) the performance measure $\mathcal{G}_\text{SE}(\tau_1,\tau_2)$.}
	\end{subfigure}
	\begin{subfigure}{\textwidth}
		\includegraphics{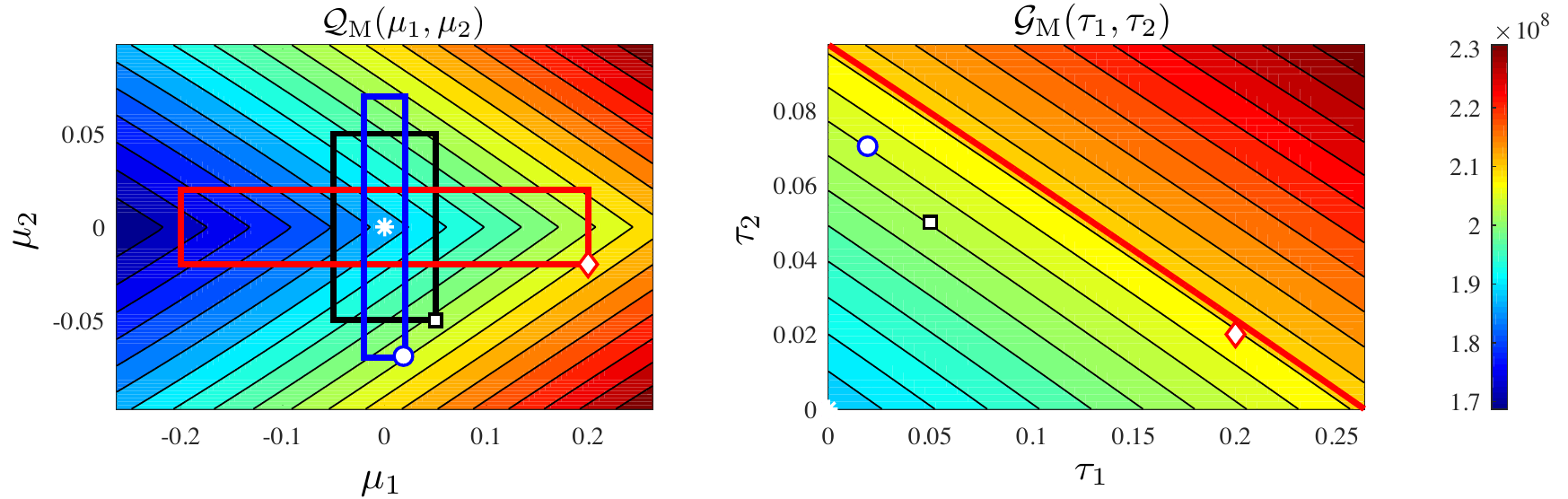}
		\caption{(left) A contour plot of the maximum von Mises stress and (right) the performance measure $\mathcal{G}_\text{M}(\tau_1,\tau_2)$.}
	\end{subfigure}
	\caption{Plate with a hole with two design parameters: The white asterisk at $\mu_1 = 0$ and $\mu_2 = 0$ denotes the nominal configuration of the plate with a hole with two design parameters. The three colored rectangles depict three different tolerance hyperrectangles, each corresponding to a tolerance $\bm{\tau}^{(i)}$, with hollow markers that indicate the location where the maximum of the restricted system performance $\mathcal{Q}(\mu_1,\mu_2) |_{\mathcal{D}_{\hat{\bm{\mu}}}(\bm{\tau}^{(i)})}$ is attained. The red contour line denotes the loci of $\bm{\tau}$, that is the immersed manifold, such that $\mathcal{G}(\tau_1,\tau_2) = \mathcal{Q}_\text{allow}$.}
	\label{fig:QSEandGSE}
\end{figure}

\begin{figure}[ht!]
\includegraphics{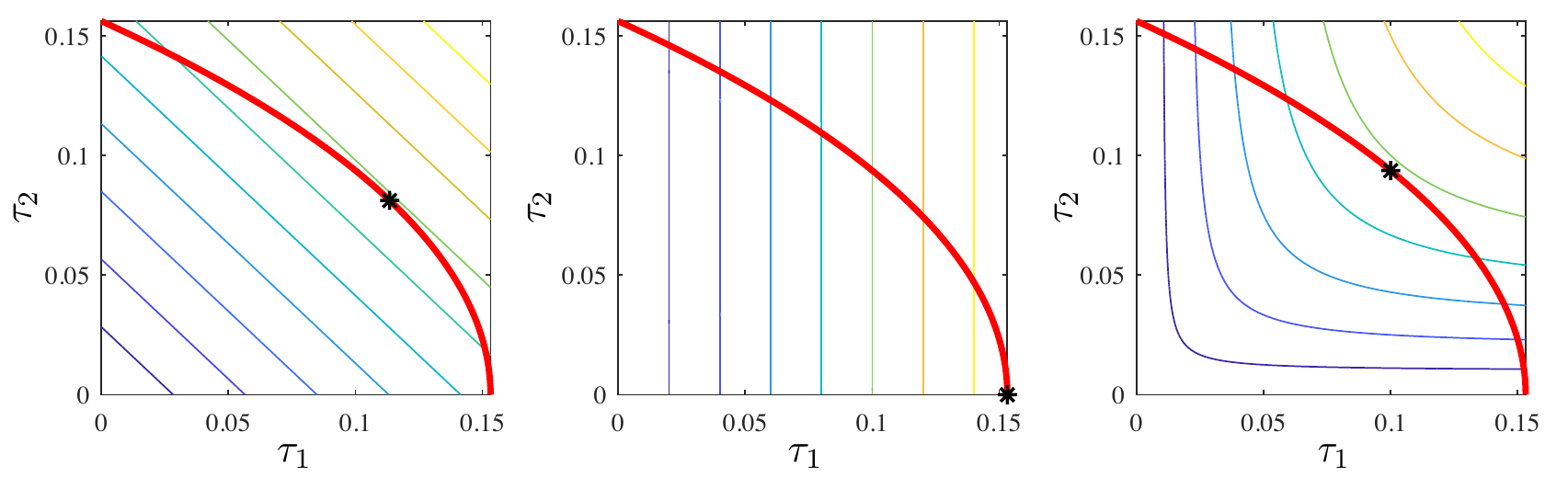}
\caption{Plate with a hole with two design parameters: The one-dimensional manifold corresponding to the level set of $\mathcal{G}(\bm{\tau}) = \mathcal{Q}_\text{allow,SE}$ for strain energy with (left) the tolerance measure $\mathcal{F}_1(\bm{\tau})$, (center) the tolerance measure $\mathcal{F}_{\bm{\mu}}(\bm{\tau})$, and (right) the tolerance measure $\mathcal{F}_{-1}(\bm{\tau})$. The black asterisk denotes the location of $\hat{\bm{\tau}}$ with respect to each norm.}
\label{fig:contours_1_norm_SE}
\end{figure}

\begin{figure}[ht!]
\includegraphics{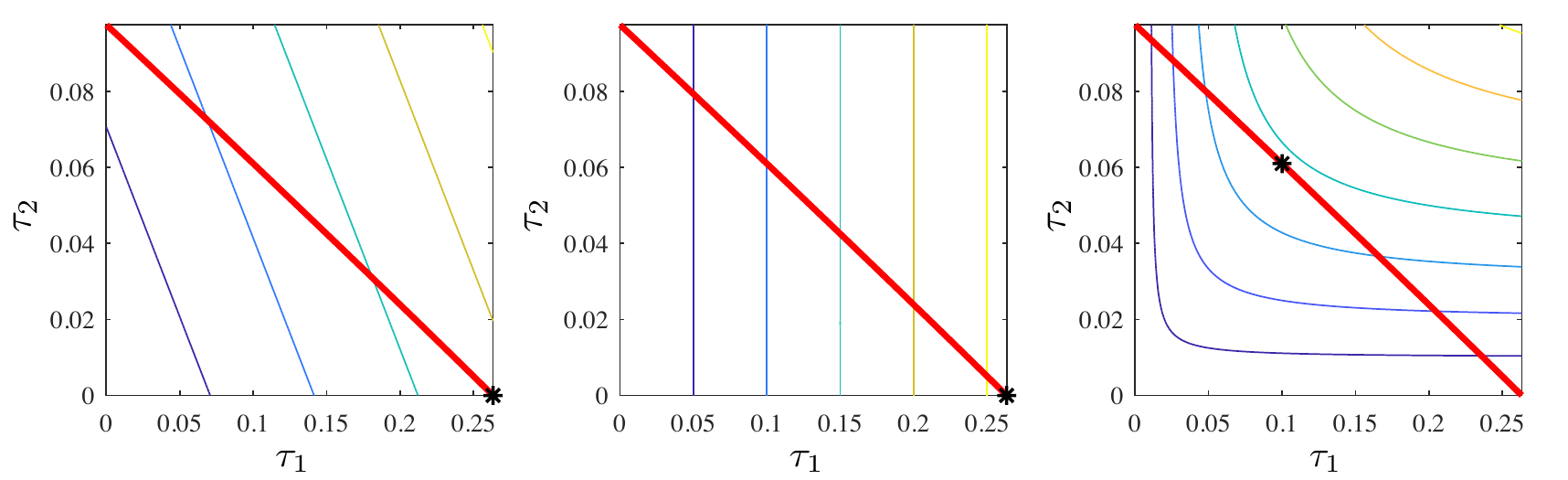}
\caption{Plate with a hole with two design parameters: The one-dimensional manifold corresponding to the level set of $\mathcal{G}(\bm{\tau}) = \mathcal{Q}_\text{allow,M}$ for maximum von Mises stress with (left) the tolerance measure $\mathcal{F}_1(\bm{\tau})$, (center) the tolerance measure $\mathcal{F}_{\bm{\mu}}(\bm{\tau})$, and (right) the tolerance measure $\mathcal{F}_{-1}(\bm{\tau})$. The black asterisk denotes the location of $\hat{\bm{\tau}}$ with respect to each norm.}
\label{fig:contours_1_norm_M}
\end{figure}
	 
With the surrogate models constructed using Eq. \eqref{eqn:strain_energy}, we are capable of employing our tolerance allocation algorithm. Fig. \ref{fig:QSEandGSE} depicts the response surfaces  for the system performances of strain energy and the maximum von Mises stress between the top and bottom of the hole as well as their corresponding performance measures restricted to the tolerance bounding box $\mathcal{T}_{\text{bounding}}$. Additionally, there are three representative tolerance hyperrectangles which are overlaid on the response surfaces along with the design maximizer, denoted with a hollow marker of identical color. The collection of these maximizers comprises the performance measure $\mathcal{G}(\bm{\tau})$. These markers are also shown on the figure depicting the performance measure for clarity. Moreover, the manifold of tolerances which attain the performance constraint is represented by the solid red line. This is the manifold over which our algorithm aims to maximize the available tolerance.

\begin{table}[t!] 
	 \centering 
	 \caption{Plate with a hole with two design parameters: Errors and convergence behavior of the tolerance allocation algorithm for the $1$-norm.}
	 \label{table:PlateHole2Dhk_1} 	 	 \begin{tabular}{cc|rrrrr|rrrrr}
$r$ & $p$ & $n_{GA}$ & $n_{CG}$ & $\| \bm{\epsilon}_{r,p} \|_\infty$ & $\varphi_1 \left(  \bm{\tau}_{r,p} \right)$ & $\gamma_\text{SE} \left(  \bm{\tau}_{r,p} \right)$ & $n_{GA}$ & $n_{CG}$ & $\| \bm{\epsilon}_{r,p} \|_\infty$ & $\varphi_1 \left(  \bm{\tau}_{r,p} \right)$ & $\gamma_\text{M} \left(  \bm{\tau}_{r,p} \right)$ \\
\hhline{============}
\multirow{3}{*}{1} & 1 & 2 & 2    & 7.462e-2    & 2.459e-1   &  6.568e-2  & 3 & 3    & 6.479e-4    &  2.455e-3   & 6.728e-4\\
& 2 & 3 & 3    & 2.311e-3    & 1.108e-3   & 1.664e-4  & 3 & 3    & 4.102e-3    &  1.583e-2   & 1.626e-3  \\
& 3 & 3 & 3    & 1.657e-3    &  1.324e-3 & 1.836e-4  & 4 & 4    & 3.156e-14    & 1.199e-13  & 5.677e-16  \\
\cline{1-12}
\multirow{3}{*}{2} & 2 & 3 & 3    & 8.492e-4    &  3.030e-4   & 3.764e-5  & 3 & 3    & 2.695e-3    & 1.034e-2    & 1.068e-3 \\
& 3 & 3 & 3    & 1.130e-4    &  5.113e-5  & 5.322e-6 & 3 & 3    & 2.477e-4    &  9.417e-4   & 9.812e-5   \\
& 4 & 3 & 3    & 9.023e-5    &  7.823e-6   & 2.463e-6  & 3 & 3    & 3.156e-14    &  1.199e-13   & 5.677e-16 \\
\cline{1-12} 
\multirow{3}{*}{3} & 3 & 3 & 3    & 1.212e-4    & 3.745e-5   & 3.520e-6  & 3 & 3    & 3.156e-14    &  1.199e-13  & 5.677e-16 \\
& 4 & 3 & 3    & 7.976e-5    &  9.094e-6 & 2.631e-6 & 3 & 3    & 3.156e-14    &  1.199e-13   & 5.677e-16  \\
& 5 & 3 & 3    & 7.949e-5    &  1.056e-5   & 3.720e-8  & 3 & 3    & 3.156e-14    &  1.199e-13 & 5.677e-16  \\
\cline{1-12}
\multirow{3}{*}{4} & 4 & 3 & 3    & 8.051e-5    &  1.060e-5   & 2.830e-6 & 3 & 3    & 3.156e-14    & 1.199e-13  & 5.677e-16  \\
& 5 & 3 & 3    & 7.938e-5    &  1.024e-5  & 7.970e-8 & 3 & 3    & 3.156e-14    &  1.199e-13 & 5.677e-16  \\
& 6 & 3 & 3    & 7.845e-5    &  1.092e-5   & 1.019e-8 & 3 & 3    & 3.156e-14    &  1.199e-13  & 5.677e-16   \\
\hhline{============}
 	 	 \end{tabular}
	 \end{table}
	 
	 \begin{table}[t!] 
	 \centering 
	 \caption{Plate with a hole with two design parameters: Errors and convergence behavior of the tolerance allocation algorithm for the $\bm{\mu}$-norm.}
	 \label{table:PlateHole2Dhk_mu} 	 	 \begin{tabular}{cc|rrrrr|rrrrr}
$r$ & $p$ & $n_{GA}$ & $n_{CG}$ & $\| \bm{\epsilon}_{r,p} \|_\infty$ & $\varphi_{\bm{\mu}} \left(  \bm{\tau}_{r,p} \right)$ & $\gamma_\text{SE} \left(  \bm{\tau}_{r,p} \right)$ & $n_{GA}$ & $n_{CG}$ & $\| \bm{\epsilon}_{r,p} \|_\infty$ & $\varphi_{\bm{\mu}} \left(  \bm{\tau}_{r,p} \right)$ & $\gamma_\text{M} \left(  \bm{\tau}_{r,p} \right)$ \\
\hhline{============}
\multirow{3}{*}{1} & 1 & 3 & 3    & 4.963e-2    & 4.792e-1   & 3.397e-2  & 3 & 3    & 6.479e-4    & 2.391e-3   & 6.728e-4  \\
& 2 & 8 & 8    & 1.769e-2    & 2.307e-5   & 1.329e-3   & 3 & 3    & 4.102e-3    & 1.583e-2   & 1.626e-3\\
& 3 & 9 & 9    & 1.558e-2    & 3.295e-5   & 1.030e-3  & 4 & 4    & 3.156e-14    & 1.156e-13   & 5.677e-16\\
\cline{1-12}
\multirow{3}{*}{2} & 2 & 9 & 9    & 1.097e-2    & 2.323e-5   & 5.111e-4  & 3 & 3    & 2.695e-3    & 1.034e-2   & 1.068e-3 \\
& 3 & 12 & 12    & 2.481e-3    & 3.165e-7   & 2.618e-5  & 3 & 3    & 2.477e-4    & 9.417e-4   & 9.812e-5  \\
& 4 & 13 & 13    & 1.581e-3    & 1.003e-7   & 1.065e-5 & 3 & 3    & 3.156e-14    & 1.161e-13   & 5.677e-16 \\
\cline{1-12}
\multirow{3}{*}{3} & 3 & 12 & 12    & 2.615e-3    & 9.234e-7   & 2.908e-5 & 3 & 3    & 3.156e-14    & 1.163e-13   & 5.677e-16  \\
& 4 & 13 & 13    & 1.324e-3    & 6.480e-8   & 7.474e-6  & 3 & 3    & 3.156e-14    & 1.162e-13   & 5.677e-16 \\
& 5 & 14 & 14    & 5.283e-4    & 1.636e-9   & 1.199e-6  & 3 & 3    & 3.156e-14    & 1.161e-13   & 5.677e-16  \\
\cline{1-12}
\multirow{3}{*}{4} & 4 & 13 & 13    & 1.308e-3    & 5.513e-8   & 7.290e-6  & 3 & 3    & 3.156e-14    & 1.162e-13   & 5.677e-16  \\
& 5 & 14 & 14    & 5.263e-4    & 1.060e-9   & 1.190e-6 & 3 & 3    & 3.156e-14    & 1.161e-13   & 5.677e-16  \\
& 6 & 14 & 14    & 3.716e-4    & 3.150e-9   & 5.961e-7 & 3 & 3    & 3.156e-14    & 1.161e-13   & 5.677e-16  \\
\hhline{============}
 	 	 \end{tabular}
	 \end{table}
	 
	  \begin{table}[t!] 
	 \centering 
	 \caption{Plate with a hole with two design parameters: Errors and convergence behavior of the tolerance allocation algorithm for the $-1$-norm.}
	 \label{table:PlateHole2Dhk_-1} 	 	 \begin{tabular}{cc|rrrrr|rrrrr}
$r$ & $p$ & $n_{GA}$ & $n_{CG}$ & $\| \bm{\epsilon}_{r,p} \|_\infty$ & $\varphi_{-1} \left(  \bm{\tau}_{r,p} \right)$ & $\gamma_\text{SE} \left(  \bm{\tau}_{r,p} \right)$ & $n_{GA}$ & $n_{CG}$ & $\| \bm{\epsilon}_{r,p} \|_\infty$ & $\varphi_{-1} \left(  \bm{\tau}_{r,p} \right)$ & $\gamma_\text{M} \left(  \bm{\tau}_{r,p} \right)$ \\
\hhline{============}
\multirow{3}{*}{1} & 1 & 2 & 2    & 2.908e-2    & 1.303e-1   & 2.322e-2 & 3 & 3    & 2.322e-3    & 1.081e-2   & 1.126e-3  \\
& 2 & 4 &  4   & 9.884e-4    & 6.032e-3   & 8.173e-4  & 3 & 3    & 5.294e-4    & 2.545e-3   & 2.517e-4\\
& 3 & 4 & 4    & 2.881e-4    & 1.545e-3   & 2.032e-4  & 3 & 3    & 1.098e-3    & 6.728e-3   & 7.024e-4  \\
\cline{1-12}
\multirow{3}{*}{2} & 2 & 4 & 4    & 1.062e-3    & 3.874e-3   & 5.193e-4 & 3 & 3    & 9.725e-4    & 5.157e-3   & 5.882e-4  \\
& 3 & 4 & 4    & 2.810e-4    & 2.710e-4   & 2.648e-5 & 3 & 3    & 3.815e-4    & 1.530e-4   & 3.920e-5\\
& 4 & 4 & 4    & 1.975e-5    & 6.544e-5   & 2.025e-5 & 3 & 3    & 3.138e-4    & 3.604e-5   & 2.646e-5\\
\cline{1-12}
\multirow{3}{*}{3} & 3 & 4 & 4    & 5.129e-4    & 3.360e-4   & 3.547e-5  & 3 & 3    & 3.730e-4    & 1.583e-4   & 3.978e-5  \\
& 4 & 4 & 4    & 1.320e-4    & 6.357e-4   & 7.673e-5  & 3 & 3    & 3.035e-4    & 4.272e-5   & 2.720e-5 \\
& 5 & 4 & 4    & 3.208e-5    & 1.927e-4   & 1.562e-5   & 3 & 3    & 3.053e-4    & 1.355e-5   & 2.401e-5\\
\cline{1-12}
\multirow{3}{*}{4} & 4 & 4 & 4    & 2.382e-4    & 6.842e-4   & 8.332e-5 & 3 & 3    & 3.036e-4    & 4.447e-5   & 2.739e-5 \\
& 5 & 4 & 4    & 2.434e-5    & 2.293e-4   & 2.068e-5 & 3 & 3    & 3.055e-4    & 1.451e-5   & 2.411e-5 \\
& 6 & 4 & 4    & 8.573e-6    & 8.146e-5   & 1.700e-7 & 3 & 3    & 3.037e-4    & 7.077e-6   & 2.331e-5  \\
\hhline{============}
 	 	 \end{tabular}
		 \vspace{20pt}
	 \end{table}

The manifolds arising from constraint equality in Fig. \ref{fig:QSEandGSE} are shown in Fig. \ref{fig:contours_1_norm_SE} and Fig. \ref{fig:contours_1_norm_M} with contours associated with the measures defined by Eq. \eqref{eqn:tolNorm_1}, Eq. \eqref{eqn:tolNorm_mu}, and Eq. \eqref{eqn:tolNorm_-1}, respectively, overlaid. The optimal tolerances with respect to these measures, i.e. $\hat{\bm{\tau}}$, are denoted by the black asterisk. Clearly, the location of this optimal tolerance is dependent on the choice of norm; however the traversal algorithm is agnostic with respect to this choice. Once again, the low-dimensional nature of this problem allows us to numerically determine the values of these optima. Therefore, we are able to assess the efficacy of the algorithm as a function of the polynomial degree and rank of the underlying separated representations.  The ``exact'' optimal tolerance values are tabulated in Table \ref{table:2dExactTol} for each tolerance measure and the subsequent allocation results are compared to these values.  Note that the optimal tolerances for maximum von Mises stress in both the 1-norm and the $\bm{\mu}$-norm are identical and reside on the boundary of the tolerance bounding box. On the other hand, the $-1$-norm has an isotropized tolerance which is almost centered in the tolerance bounding box.

Tables \ref{table:PlateHole2Dhk_1}, \ref{table:PlateHole2Dhk_mu}, and \ref{table:PlateHole2Dhk_-1} depict the effectiveness of our algorithm with respect to polynomial degree and rank of the surrogate model. In these tables, the error in the obtained tolerance, the relative error in the obtained objective functional, and the relative error in the obtained constraint functional are reported. As is clearly demonstrated, the accuracy of the obtained tolerance behaves similarly to the accuracy in the surrogate model construction. The columns $n_{GA}$ and $n_{CG}$ are the total number of iterations employed for manifold gradient ascent and manifold conjugate gradient, respectively, until the increase in allocated tolerance size is less than $10^{-6}$.  Rapid iterative convergence is realized for each case.  Note also that the manifold gradient ascent and manifold conjugate gradient methods require the same number of iterations for each case.  This is because the manifold is one-dimensional for the considered problem, so the search direction is the same for both methods during each iteration.


\subsection{Plate with a Hole with Six Design Parameters}

In this subsection, we consider the same problem as in Subsection \ref{sec:2dplate}, except that all the geometric dimensions of the plate are expressed in terms of a dimensionless design variable $\bm{\mu} \in \mathbb{R}^6$ via:
\begin{align*}
L = \mu_1 \mathscr{L}, \hspace{15pt} H = \mu_2 \mathscr{L}, \hspace{15pt} l = \mu_3 \left( \frac{L}{2} - a \right), \hspace{15pt} h = \mu_4 \left( \frac{H}{2} - b \right), \hspace{15pt} a = \mu_5 \mathscr{L}, \hspace{10pt} \text{ and } \hspace{10pt} b = a \left( 1- \left(\mu_6\right)^2 \right),
\end{align*}
where $\mathscr{L} = 1$ is a chosen length scale.  The nominal configuration for the plate is then:
\begin{equation*}
\hat{\bm{\mu}} = \left( \begin{array}{c}
\hat{\mu}_{1}\\
\hat{\mu}_{2}\\
\hat{\mu}_{3}\\
\hat{\mu}_{4}\\
\hat{\mu}_{5}\\
\hat{\mu}_{6} \end{array} \right) = \left( \begin{array}{l}
1.5\\
1.5\\
0\\
0\\
0.35\\
0 \end{array} \right).
\end{equation*}
Moreover, using the same performance constraint of $\mathcal{Q}_\text{M,allow} = 210 \times 10^6$ corresponds to an approximate allowable deviation of $10\%$ in the maximum von Mises stress. As described in Subsection \ref{section:hyperrectangleConstruct}, univariate root-finding with this performance constraint constructs the sampling domain for the plate with a hole with six design parameters, which corresponds to:
\begin{equation*}
\bm{\tau}_\text{max} = \left( \begin{array}{c}
\tau_{\text{max},1}\\
\tau_{\text{max},2}\\
\tau_{\text{max},3}\\
\tau_{\text{max},4}\\
\tau_{\text{max},5}\\
\tau_{\text{max},6} \end{array} \right) = \left( \begin{array}{l}
0.25\\
0.231\\
0.263\\
0.098\\
0.034\\
0.3 \end{array} \right).
\end{equation*}
For this example, we also consider a nonzero $\bm{\tau}_\text{min}$:
\begin{equation*}
\bm{\tau}_\text{min} = \left( \begin{array}{c}
\tau_{\text{min},1}\\
\tau_{\text{min},2}\\
\tau_{\text{min},3}\\
\tau_{\text{min},4}\\
\tau_{\text{min},5}\\
\tau_{\text{min},6} \end{array} \right) = \left( \begin{array}{l}
0.025\\
0.023\\
0.026\\
0.010\\
0.003\\
0.03 \end{array} \right).
\end{equation*}
From here, we are capable of constructing the surrogate model via separated representations over a set of Monte Carlo samples. For this purpose, we use 1500 samples and once again perform a survey over various polynomial degrees and ranks until a desired surrogate model fidelity is obtained. The results of this survey are shown in Table \ref{table:6DPlateWithHoleSR} where the surrogate models' accuracies are determined by comparison to an additional set of 500 samples not used in the construction of the surrogate model. As expected, the higher-dimensionality of this problem necessitates the use of larger polynomial degrees and separation rank for comparable accuracy to the plate with a hole with two design parameters. Regardless, as depicted in the results, the methodology is still capable of representing the true response surface with excellent precision with a relatively few number of samples as well as low polynomial degrees and rank.	
	
	\begin{table}[!t] 
	 \centering 
	 \caption{Plate with a hole with six design parameters: The maximum von Mises stress surrogate modeling errors constructed from $N = 1500$ samples and $N_c = 500$.}
	  \label{table:6DPlateWithHoleSR} 	 	 \begin{tabular}{cccccccc}
& & $r=1$ & $r=2$ & $r=3$ & $r=4$ & $r=5$ & $r=6$ \\ 
\hhline{========}
\multirow{7}{*}{$\| {\bf e }_{r,p} \|_\infty$} & degree 0 & 4.918e-1 & & & & &  \\ 
& degree 1  & 1.503e-1  & 8.576e-2  &  &   &   &   \\ 
& degree 2  & 1.370e-1  & 3.982e-2  & 2.069e-2  &   &   &   \\ 
& degree 3  & 1.420e-1  & 3.664e-2  & 1.853e-2  & 8.061e-3  &  &   \\ 
& degree 4  &  & 3.774e-2  & 1.886e-2  & 1.234e-2  & 1.023e-2  &   \\ 
& degree 5  &   &   & 1.907e-2  & 1.303e-2  & 1.129e-2  & 4.686e-3  \\ 
& degree 6  &  & &  & 1.374e-2  & 1.012e-2  & 5.192e-3  \\ 
& degree 7  &  &   &   &   & 1.055e-2  & 4.804e-3  \\ 
& degree 8  &   &   &   &   &   & 3.908e-3  \\ 
\hhline{========}
\multirow{7}{*}{$\| {\bf e }_{r,p} \|_M$} & degree 0  & 1.047e-1 & & & & & \\ 
& degree 1  & 2.788e-2  & 1.832e-2  &   &   &   &  \\ 
& degree 2  & 2.159e-2  & 7.559e-3  & 3.355e-3  &  &   &  \\ 
& degree 3  & 2.169e-2  & 7.563e-3  & 3.243e-3  & 1.416e-3  &   & \\ 
& degree 4  &   & 7.661e-3  & 3.278e-3  & 2.000e-3  & 1.127e-3  &  \\ 
& degree 5  &  &   & 3.297e-3  & 2.033e-3  & 1.114e-3  & 5.916e-4 \\ 
& degree 6  &  &  &   & 2.066e-3  & 1.167e-3  & 6.155e-4 \\ 
& degree 7  &   &   &  &   & 1.221e-3  & 6.481e-4 \\ 
& degree 8  &  &   &   &   &   & 6.330e-4 \\ 
\hhline{========}
	 	 \end{tabular}
	 \end{table}

\begin{table}[!t] 
\centering
\caption{Plate with a hole with six design parameters: Optimal tolerance values obtained using a rank 20, degree 4 separated representation constructed from 7500 samples of the maximum von Mises stress between the top and bottom of the plate with a hole. These values are treated as the ``exact'' optima. Bold numbers indicate values lying on the boundary of the tolerance bounding box.}
\label{table:6dExactTol}
\begin{tabular}{c|ccc}
& $\mathcal{F}_1(\bm{\tau})$ & $\mathcal{F}_{\bm{\mu}}(\bm{\tau})$ & $\mathcal{F}_{-1}(\bm{\tau})$\\
\hline
$\hat{\tau}_1$ & 0.073 & 0.050 & 0.044\\
$\hat{\tau}_2$ & 0.058 & 0.080 & 0.032\\
$\hat{\tau}_3$ & 0.088 & 0.080 & 0.035\\
$\hat{\tau}_4$ & {\bf 0.010} & {\bf 0.010} & 0.023\\
$\hat{\tau}_5$ & {\bf 0.003} & {\bf 0.003} & 0.013\\
$\hat{\tau}_6$ & 0.117 & {\bf 0.030} & 0.136\\
\end{tabular}
\end{table}	 
	 
To assess the algorithm's convergence behavior, we construct a high-fidelity separated representation from 7500 samples of rank 20 and degree 4 to approximate $\hat{\bm{\tau}}$. This separated representation gives $\|{\bf e}_{r,p} \|_M =$ 4.837e-5 and $\| {\bf e}_{r,p} \|_\infty =$ 3.649e-4. The high-fidelity approximations to the optimal tolerances, which are treated as the ``exact'' optima, are presented in Table \ref{table:6dExactTol}.

\begin{table}[t!] 
	 \centering 
	 \caption{Plate with a hole with six design parameters: Errors and convergence behavior of the tolerance allocation algorithm for the $1$-norm.}
	  \label{table:PlateHole6D_1} 	 	 \begin{tabular}{cc| rrrrr}
$r$ & $p$ & $n_{GA}$ & $n_{CG}$ & $\| \bm{\epsilon}_{r,p} \|_\infty$ & $\varphi_{1} \left(  \bm{\tau}_{r,p} \right)$ & $\gamma_\text{M} \left(  \bm{\tau}_{r,p} \right)$ \\
\hhline{=======}
 \multirow{3}{*}{3} & 3 & 85 & 60    & 2.977e-3    & 1.807e-2   & 2.328e-3   \\
 & 4 & 85 & 60    & 3.103e-3    & 1.991e-2   & 2.525e-3  \\
 & 5 & 85 & 60    & 3.098e-3    & 2.132e-2   & 2.673e-3\\
 \hline
 \multirow{3}{*}{4} & 4 & 86 & 61    & 3.391e-3    & 8.553e-3   & 1.068e-3  \\
 & 5 & 88 & 60    & 4.338e-3    & 5.455e-3   & 7.665e-4  \\
 & 6 & 87 & 61    & 4.291e-3    & 2.801e-3   & 4.764e-4 \\
  \hline
 \multirow{3}{*}{5} & 5 & 88 & 62    & 3.814e-3    & 3.252e-3   & 4.292e-4 \\
 & 6 & 87 & 62    & 3.738e-3    & 3.911e-3   & 5.049e-4  \\
 & 7 & 88 & 62    & 3.703e-3    & 2.895e-3   & 3.555e-4  \\
  \hline
 \multirow{3}{*}{6} & 6 & 88 & 62    & 4.162e-3    & 6.215e-4   & 6.912e-7   \\
 & 7 & 88 & 62    & 4.146e-3    & 9.188e-4   & 4.253e-5  \\
 & 8 & 88 & 62    & 3.931e-3    & 1.062e-3   & 1.677e-4 \\
 \hhline{=======}
	 	 \end{tabular}
	 \end{table}

	 	 \begin{table}[t!] 
	 \centering 
	 \caption{Plate with a hole with six design parameters: Errors and convergence behavior of the tolerance allocation algorithm for the $\bm{\mu}$-norm.}
	  \label{table:PlateHole6D_mu} 	 	 \begin{tabular}{cc| rrrrr}
$r$ & $p$ & $n_{GA}$ & $n_{CG}$ & $\| \bm{\epsilon}_{r,p} \|_\infty$ & $\varphi_{\bm{\mu}} \left(  \bm{\tau}_{r,p} \right)$ & $\gamma_\text{M} \left(  \bm{\tau}_{r,p} \right)$ \\
 \hhline{=======}
 \multirow{3}{*}{3} & 3 & 29 & 19    & 2.288e-3    & 1.525e-2   & 1.518e-3   \\
 & 4 & 29 & 19    & 2.794e-3    & 1.694e-2   & 1.627e-3  \\
 & 5 & 29 & 20    & 2.534e-3    & 2.018e-2   & 1.982e-3 \\
 \hline
 \multirow{3}{*}{4} & 4 & 29 & 19    & 9.338e-4    & 6.433e-3   & 6.957e-4 \\
 & 5 & 28 & 20    & 1.087e-3    & 2.824e-3   & 3.928e-4  \\
 & 6 & 28 & 19    & 7.918e-4    & 3.853e-4   & 1.429e-4 \\
  \hline
 \multirow{3}{*}{5} & 5 & 29 & 20    & 6.914e-4    & 4.599e-3   & 4.073e-4  \\
 & 6 & 29 & 19    & 4.488e-4    & 4.502e-3   & 4.077e-4   \\
 & 7 & 29 & 21    & 1.052e-3    & 4.078e-3   & 3.331e-4 \\
  \hline
 \multirow{3}{*}{6} & 6 & 28 & 20    & 7.188e-4    & 3.603e-4   & 7.333e-5   \\
 & 7 & 28 & 22    & 8.143e-4    & 2.831e-4   & 1.011e-4   \\
 & 8 & 29 & 17    & 2.477e-2    & 2.276e-3   & 9.222e-5  \\
 \hhline{=======}
	 	 \end{tabular}
	 \end{table}
	 
	  \begin{table}[t!] 
	 \centering 
	 \caption{Plate with a hole with six design parameters: Errors and convergence behavior of the tolerance allocation algorithm for the $-1$-norm.}
	  \label{table:PlateHole6D_-1}  	 	 \begin{tabular}{cc| rrrrr}
$r$ & $p$ & $n_{GA}$ & $n_{CG}$ & $\| \bm{\epsilon}_{r,p} \|_\infty$ & $\varphi_{-1} \left(  \bm{\tau}_{r,p} \right)$ & $\gamma_\text{M} \left(  \bm{\tau}_{r,p} \right)$ \\
 \hhline{=======}
 \multirow{3}{*}{3} & 3 & $>100$ & $>100$ & 5.614e-2    & 4.055e-2   & 3.727e-3 \\
 & 4 & $>100$ & $>100$ & 1.897e-3    & 3.227e-2   & 3.891e-3   \\
 & 5 & $>100$ & $>100$ & 5.219e-3    & 3.483e-2   & 4.085e-3  \\
 \hline
 \multirow{3}{*}{4} & 4 & $>100$ & $>100$ & 2.028e-3    & 1.368e-2   & 1.567e-3  \\
 & 5 & $>100$ & $>100$ & 2.091e-2    & 1.607e-2   & 1.358e-3  \\
 & 6 & $>100$ & $>100$ & 2.477e-2    & 1.711e-2   & 1.398e-3  \\
  \hline
 \multirow{3}{*}{5} & 5 & $>100$ & $>100$ & 5.134e-4    & 7.612e-3   & 8.839e-4  \\
 & 6 & $>100$ & $>100$ & 4.924e-4    & 8.294e-3   & 9.653e-4  \\
 & 7 & $>100$ & $>100$ & 2.142e-4    & 6.807e-3   & 7.946e-4  \\
  \hline
 \multirow{3}{*}{6} & 6 & $>100$ & $>100$ & 4.160e-3    & 4.955e-3   & 4.670e-4 \\
 & 7 & $>100$ & $>100$  & 1.620e-3    & 4.479e-3   & 4.777e-4   \\
 & 8 & $>100$ & $>100$ & 2.569e-4    & 4.664e-3   & 5.443e-4  \\
 \hhline{=======}
	 	 \end{tabular}
		 \vspace{10pt}
	 \end{table}

Tables \ref{table:PlateHole6D_1}, \ref{table:PlateHole6D_mu}, and \ref{table:PlateHole6D_-1} depict the effectiveness of our algorithm with respect to polynomial degree and rank of the surrogate model. In these tables, the error in the obtained tolerance, the relative error in the obtained objective functional, and the relative error in the obtained constraint functional are all reported, and the number of iterations ($n_{GA}$ and $n_{CG}$ for manifold gradient ascent and manifold conjugate gradient, respectively) until the increase in allocated tolerance size is within $10^{-6}$ is also reported.  Note that, as compared with the plate with a hole with two design parameters, we do not see monotonic convergence in the errors.  In fact, the error in the obtained tolerance appears to stall with increasing rank $r$ and polynomial degree $p$ for both the $1$-norm, the $\bm{\mu}$-norm, and the $-1$-norm.  However, there appears to be convergence, albeit slow convergence, in the obtained objective functional and the obtained constraint functional.  For $r = 6$ and $p = 7$, the relative error in the objective functional is less than 0.1\% for the $1$-norm, 0.03\% for the $\bm{\mu}$-norm, and 0.5\% for the $-1$-norm, and the relative error in the constraint functional is less than 0.005\% for the $1$-norm, 0.02\% for the $\bm{\mu}$-norm, and 0.05\% for the $-1$-norm.  Note the relative errors in the constraint functional are all much smaller than the prescribed allowable deviation of $10\%$ in the maximum von Mises stress.  It should finally be noted that the number of iterations required to converge the tolerance for each $r$ and $p$ is much higher than for the plate with a hole with two design parameters.  This is especially the case for the $-1$-norm.  However, the manifold conjugate gradient method does require less iterations than the manifold gradient ascent method.
	 

\subsection{L-Bracket with Seventeen Design Parameters}

\begin{figure}[b!]
	\centering
	\begin{subfigure}[c]{.48\textwidth}
		\centering
		\includegraphics[scale=1.24]{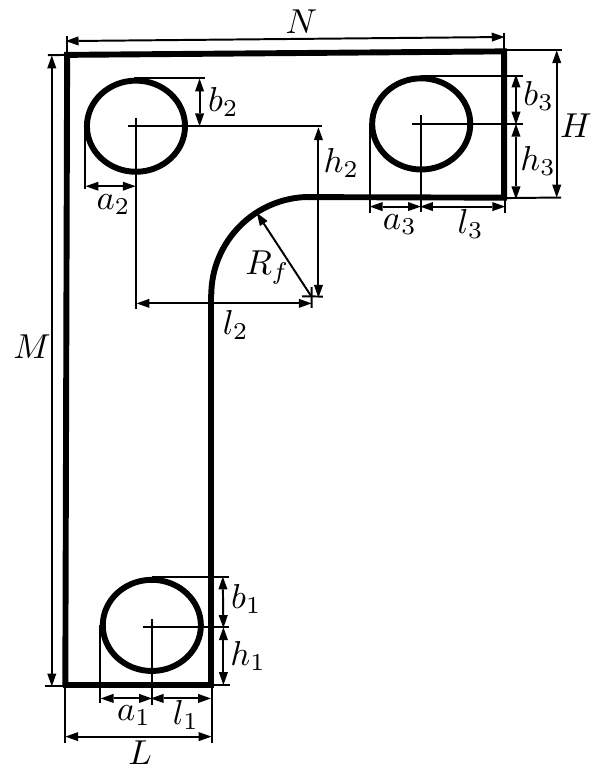}
	\end{subfigure}
	\begin{subfigure}[c]{.48\textwidth}
		\centering
		\includegraphics[scale=1.26]{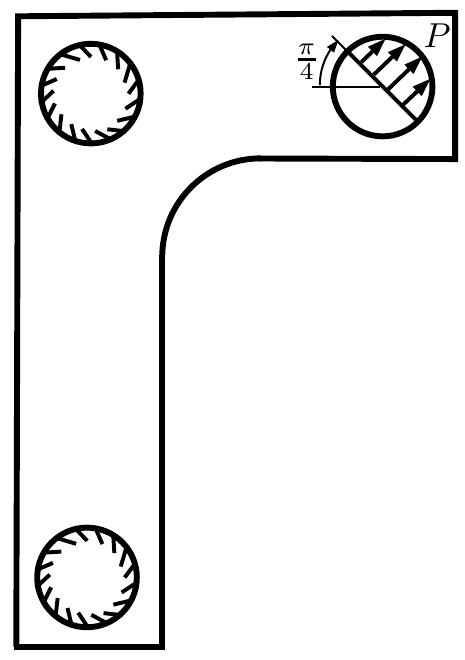}
	\end{subfigure}
	\caption{(left) L-Bracket geometric configuration. (right) The loading and boundary conditions associated with the L-Bracket problem. A uniform bearing pressure of $P=30 \times 10^6$ is applied to the top-right hole while the other two holes have zero displacement boundary conditions.}
	\label{fig:LBracket_config}
\end{figure}

\begin{figure}[t!]
	\centering
	\begin{subfigure}[c]{.5\textwidth}
		\centering
		\includegraphics[scale=1.25]{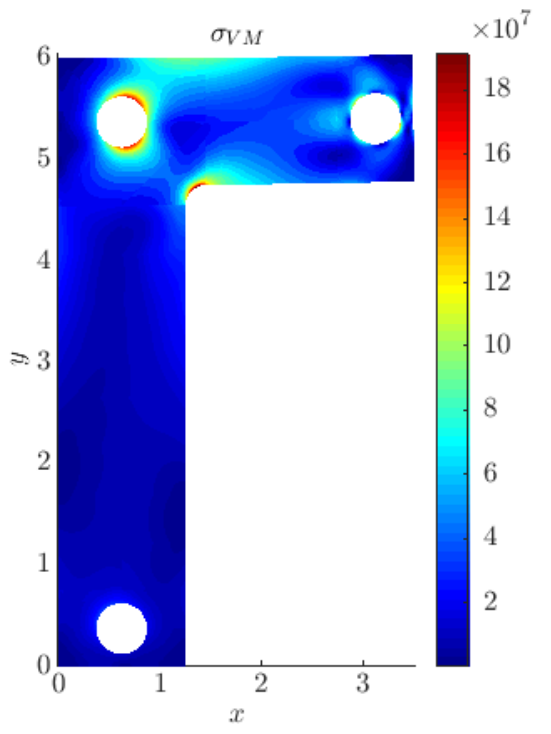}
	\end{subfigure}
	\caption{von Mises stress distribution for the L-Bracket nominal configuration.}
	\label{fig:LBracket_vonMises}
\end{figure}

The last example we consider is the structural deformation of an L-Bracket.  The geometric dimensions of the L-Bracket are displayed in the left of Fig. \ref{fig:LBracket_config}.  All seventeen geometric dimensions are expressed in terms of a design variable $\bm{\mu} \in \mathbb{R}^{17}$ via:
\begin{equation}
\begin{array}{llll}
M = \mu_1\mathscr{L}, & N = \mu_2\mathscr{L}, & L = \mu_3\mathscr{L}, & H = \mu_4\mathscr{L}, \\
R_f = \mu_5\mathscr{L}, & a_1 = \mu_6\mathscr{L}, & b_1 = a_1 \left( 1 - (\mu_7)^2 \right), & l_1 = \mu_8 \left( a_1 + \frac{L}{2} \right) + \frac{L}{2}, \\
h_1 = b_1 \left( 1 + \mu_9 \right), & a_2 = \mu_{10} \mathscr{L} , & b_2 = a_2 \left( 1 - ( \mu_{11})^2 \right), & l_2 = \left( R_f +\frac{L}{2} \right) \left( 1 + \mu_{12}\right),\\
h_2 = \left( R_f +\frac{H}{2} \right) \left( 1 + \mu_{13}\right), & a_3 = \mu_{14}\mathscr{L}, & b_3 = a_3 \left( 1 - (\mu_{15})^2 \right), & l_3 = a_3 \left( 1+\mu_{16}\right), \ \text{and} \\
 h_3 = \mu_{17} \left( b_3 + \frac{H}{2} \right) + \frac{H}{2}, & & & 
\end{array}
\end{equation}
where $\mathscr{L}=1$ is a chosen length scale. The nominal configuration for the L-Bracket is then:
\begin{align*}
\hat{\bm{\mu}} &= \left(
\hat{\mu}_{1},
\hat{\mu}_{2},
\hat{\mu}_{3},
\hat{\mu}_{4},
\hat{\mu}_{5},
\hat{\mu}_{6},
\hat{\mu}_{7},
\hat{\mu}_{8},
\hat{\mu}_{9},
\hat{\mu}_{10},
\hat{\mu}_{11},
\hat{\mu}_{12},
\hat{\mu}_{13},
\hat{\mu}_{14},
\hat{\mu}_{15},
\hat{\mu}_{16},
\hat{\mu}_{17} \right)^T \\
&= \left(
6,
3.5,
1.25,
1.25,
0.2,
0.25,
0,
0,
0,
0.25,
0,
0,
0,
0.25,
0,
0,
0 \right)^T.
\end{align*}
The L-Bracket is assumed to be made of an isotropic material with Young's modulus $E = 200 \times 10^9$ and Poisson ratio $\nu = 0.3$, and the L-Bracket is assumed to be in a plane stress state.  The loading and boundary conditions are depicted in the right of Fig. \ref{fig:LBracket_config}.  In particular, a uniform bearing pressure of $P=30 \times 10^6$ is applied to the top-right hole while the other two holes have zero displacement boundary conditions.  The other boundaries of the L-Bracket are subject to a zero traction boundary condition.  To obtain the structural deformation of the L-Bracket under the applied loading and boundary conditions, we discretize the L-Bracket with a 1792-element, multi-patch isogeometric analysis model parametrized with quadratic NURBS functions.  The von Mises stress distribution for the nominal configuration is displayed in Fig. \ref{fig:LBracket_vonMises}.

We use a performance constraint of $\mathcal{Q}_\text{M,allow} = 210 \times 10^6$ which corresponds to an approximate allowable deviation of $10\%$ in the maximum von Mises stress located at the top of the fillet. Univariate root-finding with this performance constraint constructs the sampling domain for this problem and sets the value of $\bm{\tau}_{\text{max}}$.  We also consider a nonzero $\bm{\tau}_{\text{min}}$.  The particular values of $\bm{\tau}_{\text{max}}$ and $\bm{\tau}_{\text{min}}$ are:
\begin{equation*}
\begin{array}{lr}
\bm{\tau}_{\text{max}} = \left( \begin{array}{c}
\tau_{\text{max},1} \\
\tau_{\text{max},2}   \\
\tau_{\text{max},3}  \\
\tau_{\text{max},4}  \\
\tau_{\text{max},5}  \\
\tau_{\text{max},6}   \\
\tau_{\text{max},7}   \\
\tau_{\text{max},8}   \\
\tau_{\text{max},9}   \\
\tau_{\text{max},10}  \\
\tau_{\text{max},11}   \\
\tau_{\text{max},12}   \\
\tau_{\text{max},13}  \\
\tau_{\text{max},14}   \\
\tau_{\text{max},15}  \\
\tau_{\text{max},16}  \\
\tau_{\text{max},17}   \end{array} \right) = \left( \begin{array}{l}
0.5 \\
0.208\\
0.25\\
0.071\\
0.038\\
0.1\\
0.3\\
0.6\\
0.6\\
0.037\\
0.3\\
0.566\\
0.474\\
0.025\\
0.3\\
0.6\\
0.6 \end{array} \right)   & \bm{\tau}_\text{min} = \left( \begin{array}{c}
\tau_{\text{min},1} \\
\tau_{\text{min},2}   \\
\tau_{\text{min},3}  \\
\tau_{\text{min},4}  \\
\tau_{\text{min},5}  \\
\tau_{\text{min},6}   \\
\tau_{\text{min},7}   \\
\tau_{\text{min},8}   \\
\tau_{\text{min},9}   \\
\tau_{\text{min},10}  \\
\tau_{\text{min},11}   \\
\tau_{\text{min},12}   \\
\tau_{\text{min},13}  \\
\tau_{\text{min},14}   \\
\tau_{\text{min},15}  \\
\tau_{\text{min},16}  \\
\tau_{\text{min},17}   \end{array} \right) = \left( \begin{array}{l}
0.05 \\
0.021\\
0.025\\
0.007\\
0.004\\
0.01\\
0.03\\
0.06\\
0.06\\
0.004\\
0.03\\
0.057\\
0.047\\
0.003\\
0.03\\
0.06\\
0.06 \end{array} \right)
\end{array}
\end{equation*}
From here, as before, we are capable of constructing the surrogate model via separated representations over a set of Monte Carlo samples.  We use 6000 samples for surrogate model construction and 500 additional samples to assess surrogate model accuracy.  In Table \ref{table:17dSR}, we report the surrogate modeling errors as a function of rank and polynomial degree.  From the table, we see that accuracy improves with increasing rank provided there is a corresponding increase in polynomial degree.  Despite the high-dimensionality of the design space, the low rank, separated representation surrogate models are able to achieve a high level of accuracy at relatively low rank.  For instance, for $r = 16$ and $p = 5$, the average relative error is less than 0.04\% and the maximum relative error is less than 0.2\% among the considered samples.

To assess the tolerance allocation algorithm's convergence behavior, we construct a high-fidelity separated representation from 7500 samples of rank 20 and degree 4 to approximate $\hat{\bm{\tau}}$. This separated representation gives $\|{\bf e} \|_M = 2.271$e-4 and $\| {\bf e} \|_\infty = 1.438$e-3. The high-fidelity approximations to the optimal tolerances, which are treated as the ``exact optima'', over the considered $\mathcal{G}(\bm{\tau})$ and $\mathcal{F}(\bm{\tau})$ are presented in Table \ref{table:17dExactTol}.

\begin{table}[!t] 
	 \centering 
	 \caption{L-Bracket with seventeen design parameters: The maximum von Mises stress surrogate modeling errors constructed from $N = 6000$ samples and $N_c = 500$.}
	 \label{table:17dSR} 	 	 \begin{tabular}{cccccccccc}
& & $r=9$ & $r=10$ & $r=11$ & $r=12$ & $r=13$ & $r=14$ & $r=15$ & $r=16$ \\ 
\hhline{==========} 
\multirow{5}{*}{$\| {\bf e }_{r,p} \|_\infty$}& degree 1  & 5.777e-2 &                    &                   &                    & 9.770e-2  &                  &                    &                    \\ 
& degree 2  & 5.730e-3 & 4.665e-3  &                    &                    & 6.340e-3  & 9.105e-3 &                     &                    \\ 
& degree 3  &                   & 2.087e-3  & 1.993e-3  &                    &                    & 7.317e-3 & 2.526e-3  &                    \\ 
& degree 4  &                   &                   &  2.773e-3 & 2.448e-3  &                     &                  & 2.052e-3 & 1.627e-3                 \\ 
& degree 5  &                   &                   &                   & 3.498e-3  &                     &                  &                             &   1.638e-3                 \\ 
\hhline{==========} 
\multirow{5}{*}{$\| {\bf e }_{r,p} \|_M$}& degree 1  & 1.266e-2 &                   &                     &                  & 1.337e-2 &                     &                    &                    \\ 
& degree 2  & 1.122e-3 & 1.048e-3 &                     &                   & 1.001e-3 &  9.657e-4 &                     &                    \\ 
& degree 3  &                  &  4.621e-4  & 4.236e-4  &                   &                   &  4.018e-4 &3.462e-4  &                    \\ 
& degree 4  &                   &                    & 5.170e-4  & 4.251e-4 &                     &                  & 3.518e-4 &  2.957e-4                 \\ 
& degree 5  &                   &                     &                   & 4.622e-4  &                    &                  &                  &   3.131e-4                 \\ 
\hhline{==========} 
	 	 \end{tabular}
	 \end{table}

\begin{table}[!t] 
\centering
\caption{L-Bracket with seventeen design parameters: Tolerance values obtained using a rank 20, degree 4 separated representation constructed from 7500 samples. These values are treated as the ``exact'' optima. Bold numbers indicate values lying on the boundary of the tolerance bounding box.}
\label{table:17dExactTol}
\begin{tabular}{c|ccc}
& $\mathcal{F}_{1}(\bm{\tau})$ & $\mathcal{F}_{\bm{\mu}}(\bm{\tau})$ & $\mathcal{F}_{-1}(\bm{\tau})$ \\
\hline
$\hat{\tau}_M$ & 0.348 & 0.056 & {\bf 0.050}\\
 $\hat{\tau}_N$ & {\bf 0.021} & {\bf 0.021} & 0.022\\
$\hat{\tau}_{L_1}$ & {\bf 0.025} & {\bf 0.036} & 0.028\\
$\hat{\tau}_{H_3}$ & {\bf 0.007} & {\bf 0.007} & 0.008\\
$\hat{\tau}_{R_f}$ & {\bf 0.004} & {\bf 0.004} & 0.006\\
$\hat{\tau}_{a_1}$ & {\bf 0.100} & {\bf 0.010} & 0.012\\
$\hat{\tau}_{e_1}$ & {\bf 0.030} & {\bf 0.030} & 0.033\\
$\hat{\tau}_{h_1}$ & {\bf 0.600} & {\bf 0.060} & 0.067\\
$\hat{\tau}_{k_1}$ & {\bf 0.600} & {\bf 0.060} & 0.067\\
$\hat{\tau}_{a_2}$ & {\bf 0.004} & {\bf 0.004} & 0.006\\
$\hat{\tau}_{e_2}$ & 0.071 & {\bf 0.030} & 0.034\\
$\hat{\tau}_{h_2}$ & {\bf 0.057} & 0.167 & 0.063\\
$\hat{\tau}_{k_2}$ & {\bf 0.047} & {\bf 0.047} & 0.053\\
$\hat{\tau}_{a_3}$ & {\bf 0.003} & {\bf 0.003} & 0.005\\
$\hat{\tau}_{e_3}$ & {\bf 0.300} & {\bf 0.030} & 0.034\\
$\hat{\tau}_{h_3}$ & {\bf 0.060} & 0.090 & {\bf 0.060}\\
$\hat{\tau}_{k_3}$ & 0.594 & {\bf 0.060} & 0.068
\end{tabular}
\end{table}

Tables \ref{table:LBracket_1}, \ref{table:LBracket_mu}, and \ref{table:LBracket_-1} depict the effectiveness of our algorithm with respect to polynomial degree and rank of the surrogate model for the L-Bracket with seventeen design parameters. In these tables, the error in the obtained tolerance, the relative error in the obtained objective functional, and the relative error in the obtained constraint functional are all reported, and the number of iterations $n_{GA}$ and $n_{CG}$ until the increase in allocated tolerance size is within $10^{-6}$ are also reported.  Note that, like the plate with a hole with six design parameters, the error in the obtained tolerance stalls, but there appears to be slow convergence in the obtained objective functional.  For $r = 16$ and $p = 5$, the relative error in the objective functional is less than 1\% for the $1$-norm, 0.4\% for the $\bm{\mu}$-norm, and 0.03\% for the $-1$-norm.  Unlike the plate with a hole with six design parameters, the error in the obtained constraint functional also stalls.  However, the error in the constraint functional is less than 0.5\% for all cases, considerably smaller than the prescribed allowable deviation of $10\%$ in the maximum von Mises stress.  Finally, it is noted that the manifold conjugate gradient method does require remarkably less iterations than the manifold gradient ascent method for this high-dimensional problem, especially for the $\bm{\mu}$-norm and the $-1$-norm.

 \begin{table}[t!] 
	 \centering 
	 \caption{L-Bracket with seventeen design parameters: Errors and convergence behavior of the tolerance allocation algorithm for the $1$-norm.}
	  \label{table:LBracket_1} 	 	 \begin{tabular}{cc| rrrrr}
r & p & $n_{GA}$ & $n_{CG}$ & $\| \bm{\epsilon}_{r,p} \|_\infty$ & $\varphi_{1} \left(  \bm{\tau}_\text{r,p} \right)$ & $\gamma_\text{M} \left(  \bm{\tau}_\text{r,p} \right)$ \\
 \hhline{=======}
\multirow{2}{*}{9} & 1 & 66 & 62    & 4.931e-1    & 2.690e-2   & 3.334e-3 \\
 & 2 & $>$100 & $>$100 & 2.700e-1    & 4.591e-2   & 1.468e-4   \\
\hline 
\multirow{2}{*}{10} & 2 & $>$100 & $>$100 & 2.700e-1    & 5.045e-2   & 2.244e-4  \\
 & 3 & $>$100 & 74 & 5.340e-1    & 1.094e-3   & 2.271e-4  \\
 \hline
 \multirow{2}{*}{11}  & 3& $>$100 & $>$100 & 5.340e-1    & 1.381e-1   & 2.021e-4  \\
 & 4 & $>$100 & 76 & 3.228e-1    & 1.112e-1   & 6.753e-5  \\
 \hline
 \multirow{2}{*}{12} & 4 & $>$100 & $>$100 & 2.043e-1    & 6.775e-2   & 1.370e-4  \\
 & 5 & $>$100 & 69 & 2.700e-1    & 3.035e-2   & 1.029e-4 \\
  \hline
 \multirow{2}{*}{13}  & 1 & 66 & 61    & 4.978e-1    & 2.670e-2   & 3.277e-3 \\
  & 2 & $>$100 & 42 & 2.953e-1    & 6.408e-2   & 2.531e-5 \\
  \hline
 \multirow{2}{*}{14}  & 2 & $>$100 & $>$100 & 3.279e-1    & 4.571e-2   & 9.149e-5 \\
 & 3 & $>$100 & $>$100 & 2.700e-1    & 8.513e-2   & 6.960e-5   \\
   \hline
 \multirow{2}{*}{15}  & 3 & $>$100 & 31    & 4.318e-1    & 1.013e-1   & 1.043e-4  \\
 & 4 & $>$100 & 38 & 3.754e-1    & 7.107e-2   & 1.457e-4 \\
   \hline
 \multirow{2}{*}{16} & 4 & $>$100 & 93 & 2.983e-1    & 3.304e-2   & 1.165e-4  \\
  & 5 & 70 & 24    & 4.972e-1    & 9.811e-3   & 2.646e-4 \\
 \hhline{=======}
	 	 \end{tabular}
	 \end{table}
	 
	 \begin{table}[t!] 
	 \centering 
	 \caption{L-Bracket with seventeen design parameters: Errors and convergence behavior of the tolerance allocation algorithm for the $\bm{\mu}$-norm.}
	  \label{table:LBracket_mu} 	 	 \begin{tabular}{cc| rrrrr}
r & p & $n_{GA}$ & $n_{CG}$ & $\| \bm{\epsilon}_{r,p} \|_\infty$ & $\varphi_{\bm{\mu}} \left(  \bm{\tau}_\text{r,p} \right)$ & $\gamma_\text{M} \left(  \bm{\tau}_\text{r,p} \right)$ \\
 \hhline{=======}
\multirow{2}{*}{9} & 1  & 49 & 11  & 4.695e-2    & 1.091e-1   & 2.984e-3  \\
 & 2 & $>$100 & 7 & 1.317e-1    & 1.459e-2   & 1.575e-4  \\
\hline 
\multirow{2}{*}{10} & 2 & 82 & 6    & 3.026e-2    & 9.584e-3   & 1.877e-4  \\
 & 3 & 54 & 7    & 3.799e-2    & 5.495e-4   & 1.985e-4 \\
 \hline
 \multirow{2}{*}{11}  & 3 & 62 & 7    & 4.082e-2    & 1.681e-2   & 1.966e-4\\
 & 4 & 56 & 7    & 3.876e-2    & 6.594e-3   & 4.562e-5   \\
 \hline
 \multirow{2}{*}{12} & 4 & 82 & 7    & 3.026e-2    & 1.394e-2   & 1.175e-4 \\
 & 5 & $>$100 & 7 & 4.103e-2    & 1.195e-2   & 1.910e-4 \\
  \hline
 \multirow{2}{*}{13}  & 1 & 49 & 11    & 4.705e-2    & 1.084e-1   & 2.929e-3  \\
  & 2 &  79 & 7   & 3.026e-2    & 1.457e-2   & 1.391e-5  \\
  \hline
 \multirow{2}{*}{14}  & 2 & 57  & 8  & 3.372e-2    & 4.805e-3   & 6.746e-5 \\
 & 3 &  75  & 8  & 3.430e-2    & 3.823e-2   & 5.397e-5   \\
   \hline
 \multirow{2}{*}{15}  & 3 & $>$100 & 7 & 1.144e-2    & 3.984e-3   & 3.428e-5  \\
 & 4 & 81  & 7  & 3.026e-2    & 9.893e-3   & 1.133e-4  \\
   \hline
 \multirow{2}{*}{16} & 4 &  62  & 7  & 7.662e-2    & 2.871e-3   & 7.815e-5  \\
  & 5 & $>$100 & 7 & 1.716e-2    & 3.503e-3   & 2.313e-4   \\
 \hhline{=======}
	 	 \end{tabular}
	 \end{table}
	 
	 \begin{table}[t!] 
	 \centering 
	 \caption{L-Bracket with seventeen design parameters: Errors and convergence behavior of the tolerance allocation algorithm for the $-1$-norm.}
	  \label{table:LBracket_-1} 	 	 \begin{tabular}{cc| rrrrr}
r & p & $n_{GA}$ & $n_{CG}$ & $\| \bm{\epsilon}_{r,p} \|_\infty$ & $\varphi_{-1} \left(  \bm{\tau}_\text{r,p} \right)$ & $\gamma_\text{M} \left(  \bm{\tau}_\text{r,p} \right)$ \\
 \hhline{=======}
\multirow{2}{*}{9} & 1  & $>$100 & $>$100  & 1.178e-2    & 1.214e-1   & 3.728e-3   \\
 & 2 &  65 & 13   & 8.653e-3    & 4.721e-3   & 1.048e-4   \\
\hline 
\multirow{2}{*}{10} & 2 & $>$100 & 10   & 6.578e-3    & 2.737e-3   & 1.458e-4   \\
 & 3 & $>$100 & 13 & 3.514e-2    & 5.525e-3   & 1.675e-4  \\
 \hline
 \multirow{2}{*}{11}  & 3& $>$100  & 4 & 8.075e-3    & 1.819e-3   & 1.534e-4  \\
 & 4 & 43  & 9  & 8.075e-3    & 1.816e-3   & 3.380e-6  \\
 \hline
 \multirow{2}{*}{12} & 4 & $>$100  & 8  & 8.075e-3    & 5.857e-3   & 6.364e-5 \\
 & 5 & $>$100 & 6 & 2.047e-2  & 3.132e-4   & 8.808e-5  \\
  \hline
 \multirow{2}{*}{13}  & 1 & $>$100 & $>$100 & 1.264e-2    & 1.207e-1   & 3.672e-3  \\
  & 2 & $>$100 & 7 & 2.000e-2    & 7.108e-4   & 7.152e-5   \\
  \hline
 \multirow{2}{*}{14}  & 2 & 25 & 16    & 7.454e-3    & 4.759e-3   & 5.788e-6 \\
 & 3 &  32 & 7   & 5.623e-3    & 1.305e-3   & 9.865e-6 \\
   \hline
 \multirow{2}{*}{15}  & 3 & 21 & 12    & 7.815e-3    & 3.915e-3   & 1.101e-5 \\
 & 4 & $>$100 & 8 & 3.190e-2    & 1.298e-3   & 4.391e-5 \\
   \hline
 \multirow{2}{*}{16} & 4 & $>$100 & 4 & 1.532e-2    & 2.906e-3   & 4.544e-6 \\
  & 5 & 40 & 7   & 5.924e-3    & 2.901e-4   & 1.157e-4  \\
 \hhline{=======}
	 	 \end{tabular}
	 \end{table}

\section{Conclusions}
\label{sec:conclusions}

In this paper, we have presented a novel tolerance allocation methodology which is suitable for geometric design configurations parameterized with moderate dimensionality. This approach naturally emanates from design space exploration techniques and parametric modeling paradigms. Although the methodology was presented in this paper for the setting of linear elasticity, it is overall agnostic with respect to the underlying physical model and performance constraints considered. Provided with a parametric PDE, a user is capable of allocating design tolerances based on prescribed performance constraints by solving an optimization problem over an immersed manifold of codimension one. We have presented both gradient ascent and conjugate gradient algorithms for performing optimization along this manifold to ultimately arrive at a tolerance for which all designs within the tolerance satisfy the prescribed performance constraint.  Moreover, to reduce computational cost, we proposed the use of low-rank, separated representation surrogate models to map the design parameter variation to the system performance. Numerical results presented, which included the plate with a hole described with two design parameters, the plate with a hole described with six design parameters, and the L-Bracket described with seventeen design parameters, demonstrate that this methodology is robust up to moderate dimensionality. However there is an incurred increase in computational expense due to the offline, separated representation construction, which requires a larger set of sample realizations to obtain a desired surrogate model fidelity.

This paper highlights several outstanding limitations which the authors plan to tackle in a future paper. First, the separated representation methodology used for surrogate model construction provides an excellent tool for efficient and accurate surrogate modeling with respect to \emph{smooth} system responses to smooth changes in design variables. However in many practical scenarios, the responses are not expected to be smooth, e.g. in the scenario where the \emph{location} of maximum stress changes in a discontinuous fashion with respect to a continuous change in design parameter. Utilizing a continuity-adaptive basis, rather than globally-smooth Legendre polynomials, for separated representation construction may remedy this issue and provide a means for attaining a tolerance allocation methodology which is capable of ensuring a conformity to geometrically-global, pointwise, worst-case performance criteria. Second, extending this framework to incorporate multiple constraint functionals would be a beneficial contribution in a variety of scenarios. For example, a physical system requiring conformity to a maximum stress and a maximum displacement can be ensured through a performance-based tolerance allocation routine of this form. Lastly, although worst-case tolerance allocation provides a measure of design conformity with respect to \emph{every} design within the prescribed tolerance, this is arguably too restrictive since, probabilistically speaking, the absolute worst-case scenario is extremely unlikely to occur in practice. To this end, we propose changing the constraint functional from a pointwise metric to one that is statistical, in particular, a metric which ensures that the designs contained within the prescribed tolerance hyperrectangle conform to the performance constraint in a statistically-average sense with respect to a provided probability density function.

\section{Acknowledgements}

AD acknowledges funding by the US Department of Energy's Office of Science Advanced Scientific Computing Research, Award DE-SC0006402, and National Science Foundation Grant CMMI-145460.  The authors would like to thank Matthew Reynolds for his advisement and expertise in separated representation construction and subsequent optimization protocols which contributed to the development of this work.

\section{References}

\renewcommand\refname{\vskip -1cm}
\nocite{*}

\bibliography{References.bib}{}

\begin{thebibliography}{10}

\bibitem{absil2007trust}
P.-A. Absil, C.~G. Baker, and K.~A. Gallivan.
\newblock {Trust-region methods on Riemannian manifolds}.
\newblock {\em Foundations of Computational Mathematics}, 7(3):303--330, 2007.

\bibitem{absil2009optimization}
P.-A. Absil, R.~Mahony, and R.~Sepulchre.
\newblock {\em Optimization Algorithms on Matrix Manifolds}.
\newblock Princeton University Press, 2009.

\bibitem{absil2012projection}
P.-A. Absil and J.~Malick.
\newblock Projection-like retractions on matrix manifolds.
\newblock {\em SIAM Journal on Optimization}, 22(1):135--158, 2012.

\bibitem{absil2015low}
P.-A. Absil and I.~V. Oseledets.
\newblock {Low-rank retractions: A survey and new results}.
\newblock {\em Computational Optimization and Applications}, 62(1):5--29, 2015.

\bibitem{battaglino2018practical}
C.~Battaglino, G.~Ballard, and T.~G. Kolda.
\newblock {A practical randomized CP tensor decomposition}.
\newblock {\em SIAM Journal on Matrix Analysis and Applications},
  39(2):876--901, 2018.

\bibitem{benzaken2017rapid}
J.~Benzaken, A.~J. Herrema, M.-C. Hsu, and J.~A. Evans.
\newblock A rapid and efficient isogeometric design space exploration framework
  with application to structural mechanics.
\newblock {\em Computer Methods in Applied Mechanics and Engineering},
  316:1215--1256, 2017.

\bibitem{beylkin2009multivariate}
G.~Beylkin, J.~Garcke, and M.~J. Mohlenkamp.
\newblock Multivariate regression and machine learning with sums of separable
  functions.
\newblock {\em SIAM Journal on Scientific Computing}, 31(3):1840--1857, 2009.

\bibitem{beylkin2002numerical}
G.~Beylkin and M.~J. Mohlenkamp.
\newblock Numerical operator calculus in higher dimensions.
\newblock {\em Proceedings of the National Academy of Sciences},
  99(16):10246--10251, 2002.

\bibitem{beylkin2005algorithms}
G.~Beylkin and M.~J. Mohlenkamp.
\newblock Algorithms for numerical analysis in high dimensions.
\newblock {\em SIAM Journal on Scientific Computing}, 26(6):2133--2159, 2005.

\bibitem{boothby1986introduction}
W.~M. Boothby.
\newblock {\em An Introduction to Differentiable Manifolds and Riemannian
  Geometry}, volume 120.
\newblock Academic Press, 1986.

\bibitem{borden2011isogeometric}
M.~J. Borden, M.~A. Scott, J.~A. Evans, and T.~J.~R. Hughes.
\newblock {Isogeometric finite element data structures based on B{\'e}zier
  extraction of NURBS}.
\newblock {\em International Journal for Numerical Methods in Engineering},
  87(1-5):15--47, 2011.

\bibitem{brent2013algorithms}
R.~P. Brent.
\newblock {\em Algorithms for Minimization Without Derivatives}.
\newblock Courier Corporation, 2013.

\bibitem{bro1997parafac}
R.~Bro.
\newblock {PARAFAC. Tutorial and applications}.
\newblock {\em Chemometrics and Intelligent Laboratory Systems},
  38(2):149--171, 1997.

\bibitem{chase1999tolerance}
K.~W. Chase.
\newblock Tolerance allocation methods for designers.
\newblock {\em ADCATS Report}, 99(6):1--28, 1999.

\bibitem{chase1988design}
K.~W. Chase and W.~H. Greenwood.
\newblock Design issues in mechanical tolerance analysis.
\newblock {\em Manufacturing Review}, 1(1):50--59, 1988.

\bibitem{chase1990least}
K.~W. Chase, W.~H. Greenwood, B.~G. Loosli, and L.~F. Hauglund.
\newblock Least cost tolerance allocation for mechanical assemblies with
  automated process selection.
\newblock {\em Manufacturing Review}, 3(1):49--59, 1990.

\bibitem{choi2000optimal}
H.-G.~R. Choi, M.-H. Park, and E.~Salisbury.
\newblock Optimal tolerance allocation with loss functions.
\newblock {\em Journal of Manufacturing Science and Engineering},
  122(3):529--535, 2000.

\bibitem{cottrell2009isogeometric}
J.~A. Cottrell, T.~J.~R. Hughes, and Y.~Bazilevs.
\newblock {\em Isogeometric Analysis: Toward Integration of CAD and FEA}.
\newblock John Wiley \& Sons, 2009.

\bibitem{de2000best}
L.~De~Lathauwer, B.~De~Moor, and J.~Vandewalle.
\newblock On the best rank-1 and rank-(r 1, r 2,..., rn) approximation of
  higher-order tensors.
\newblock {\em SIAM Journal on Matrix Analysis and Applications},
  21(4):1324--1342, 2000.

\bibitem{doostan2013non}
A.~Doostan, A.~A. Validi, and G.~Iaccarino.
\newblock Non-intrusive low-rank separated approximation of high-dimensional
  stochastic models.
\newblock {\em Computer Methods in Applied Mechanics and Engineering},
  263:42--55, 2013.

\bibitem{feng1997robust}
C.-X.~J. Feng and A.~Kusiak.
\newblock Robust tolerance design with the integer programming approach.
\newblock {\em Journal of Manufacturing Science and Engineering},
  119(4A):603--610, 1997.

\bibitem{flaig2002process}
J.~J. Flaig.
\newblock Process capability optimization.
\newblock {\em Quality Engineering}, 15(2):233--242, 2002.

\bibitem{fletcher1964function}
R.~Fletcher and C.~M. Reeves.
\newblock Function minimization by conjugate gradients.
\newblock {\em The Computer Journal}, 7(2):149--154, 1964.

\bibitem{gawlik2018high}
E.~S. Gawlik and M.~Leok.
\newblock High-order retractions on matrix manifolds using projected
  polynomials.
\newblock {\em SIAM Journal on Matrix Analysis and Applications},
  39(2):801--828, 2018.

\bibitem{heling2016connected}
B.~Heling, A.~Aschenbrenner, M.~S.~J. Walter, and S.~Wartzack.
\newblock On connected tolerances in statistical tolerance-cost-optimization of
  assemblies with interrelated dimension chains.
\newblock {\em Procedia CIRP}, 43:262--267, 2016.

\bibitem{herrema2017framework}
A.~J. Herrema, N.~M. Wiese, C.~N. Darling, B.~Ganapathysubramanian,
  A.~Krishnamurthy, and M.-C. Hsu.
\newblock A framework for parametric design optimization using isogeometric
  analysis.
\newblock {\em Computer Methods in Applied Mechanics and Engineering},
  316:944--965, 2017.

\bibitem{hsu2015interactive}
M.-C. Hsu, C.~Wang, A.~J. Herrema, D.~Schillinger, A.~Ghoshal, and Y.~Bazilevs.
\newblock An interactive geometry modeling and parametric design platform for
  isogeometric analysis.
\newblock {\em Computers \& Mathematics with Applications}, 70(7):1481--1500,
  2015.

\bibitem{huang2015broyden}
W.~Huang, K.~A. Gallivan, and P.-A. Absil.
\newblock {A Broyden class of quasi-Newton methods for Riemannian
  optimization}.
\newblock {\em SIAM Journal on Optimization}, 25(3):1660--1685, 2015.

\bibitem{HughesFEM}
T.~J.~R. Hughes.
\newblock {\em The Finite Element Method: Linear Static and Dynamic Finite
  Element Analysis}.
\newblock Courier Corporation, 2012.

\bibitem{hughes2005isogeometric}
T.~J.~R. Hughes, J.~A. Cottrell, and Y.~Bazilevs.
\newblock {Isogeometric analysis: CAD, finite elements, NURBS, exact geometry
  and mesh refinement}.
\newblock {\em Computer Methods in Applied Mechanics and Engineering},
  194(39-41):4135--4195, 2005.

\bibitem{huper2004newton}
K.~Huper and J.~Trumpf.
\newblock Newton-like methods for numerical optimization on manifolds.
\newblock In {\em Conference Record of the Thirty-Eighth Asilomar Conference on
  Signals, Systems and Computers, 2004.}, volume~1, pages 136--139. IEEE, 2004.

\bibitem{ji2000optimal}
S.~Ji, X.~Li, Y.~Ma, and H.~Cai.
\newblock Optimal tolerance allocation based on fuzzy comprehensive evaluation
  and genetic algorithm.
\newblock {\em The International Journal of Advanced Manufacturing Technology},
  16(7):461--468, 2000.

\bibitem{kroonenberg1980principal}
P.~M. Kroonenberg and J.~De~Leeuw.
\newblock Principal component analysis of three-mode data by means of
  alternating least squares algorithms.
\newblock {\em Psychometrika}, 45(1):69--97, 1980.

\bibitem{lin1997study}
C.-Y. Lin, W.-H. Huang, M.-C. Jeng, and J.-L. Doong.
\newblock {Study of an assembly tolerance allocation model based on Monte Carlo
  simulation}.
\newblock {\em Journal of Materials Processing Technology}, 70(1-3):9--16,
  1997.

\bibitem{ngoi1999optimum}
B.~K.~A. Ngoi and O.~J. Min.
\newblock Optimum tolerance allocation in assembly.
\newblock {\em The International Journal of Advanced Manufacturing Technology},
  15(9):660--665, 1999.

\bibitem{parkinson1993general}
A.~Parkinson, C.~Sorensen, and N.~Pourhassan.
\newblock A general approach for robust optimal design.
\newblock {\em Journal of Mechanical Design}, 115(1):74--80, 1993.

\bibitem{polak1969note}
E.~Polak and G.~Ribiere.
\newblock {Note sur la convergence de m{\'e}thodes de directions
  conjugu{\'e}es}.
\newblock {\em ESAIM: Mathematical Modelling and Numerical
  Analysis-Mod{\'e}lisation Math{\'e}matique et Analyse Num{\'e}rique},
  3(R1):35--43, 1969.

\bibitem{ramesh2009concurrent}
R.~Ramesh, J.~Jerald, T.~Page, and S.~Arunachalam.
\newblock Concurrent tolerance allocation using an artificial neural network
  and continuous ant colony optimisation.
\newblock {\em International Journal of Design Engineering}, 2(1):1--25, 2009.

\bibitem{reynolds2017optimization}
M.~J. Reynolds, G.~Beylkin, and A.~Doostan.
\newblock Optimization via separated representations and the canonical tensor
  decomposition.
\newblock {\em Journal of Computational Physics}, 348:220--230, 2017.

\bibitem{reynolds2016randomized}
M.~J. Reynolds, A.~Doostan, and G.~Beylkin.
\newblock {Randomized alternating least squares for canonical tensor
  decompositions: Application to a PDE with random data}.
\newblock {\em SIAM Journal on Scientific Computing}, 38(5):A2634--A2664, 2016.

\bibitem{scott2011isogeometric}
M.~A. Scott, M.~J. Borden, C.~V. Verhoosel, T.~W. Sederberg, and T.~J.~R.
  Hughes.
\newblock {Isogeometric finite element data structures based on B{\'e}zier
  extraction of T-splines}.
\newblock {\em International Journal for Numerical Methods in Engineering},
  88(2):126--156, 2011.

\bibitem{sibileau2018explicit}
A.~Sibileau, A.~Garc{\'\i}a-Gonz{\'a}lez, F.~Auricchio, S.~Morganti, and
  P.~D{\'\i}ez.
\newblock {Explicit parametric solutions of lattice structures with proper
  generalized decomposition (PGD)}.
\newblock {\em Computational Mechanics}, 62(4):871--891, 2018.

\bibitem{smith1994optimization}
S.~T. Smith.
\newblock {Optimization techniques on Riemannian manifolds}.
\newblock {\em Fields Institute Communications}, 3(3):113--135, 1994.

\bibitem{spotts1973allocation}
M.~F. Spotts.
\newblock Allocation of tolerances to minimize cost of assembly.
\newblock {\em Journal of Engineering for Industry}, 95(3):762--764, 1973.

\bibitem{wu2009improved}
F.~Wu, J.-Y. Dantan, A.~Etienne, A.~Siadat, and P.~Martin.
\newblock {Improved algorithm for tolerance allocation based on Monte Carlo
  simulation and discrete optimization}.
\newblock {\em Computers \& Industrial Engineering}, 56(4):1402--1413, 2009.

\bibitem{xiu2002wiener}
D.~Xiu and G.~E. Karniadakis.
\newblock {The Wiener--Askey polynomial chaos for stochastic differential
  equations}.
\newblock {\em SIAM Journal on Scientific Computing}, 24(2):619--644, 2002.

\bibitem{zhang1993integrated}
C.~Zhang and H.-P.~B. Wang.
\newblock Integrated tolerance optimisation with simulated annealing.
\newblock {\em The International Journal of Advanced Manufacturing Technology},
  8(3):167--174, 1993.

\end{thebibliography}
\bibliographystyle{plain}

\end{document}